\theoremstyle{theorem}
\newtheorem{theorem}{Theorem}
\theoremstyle{definition}
\newtheorem{definition}{Definition}
\newtheorem{prop}{Proposition}
\newtheorem{remark}{Remark}
\title{Slant and semi-slant submanifolds in metallic Riemannian manifolds}
\author{Cristina E. Hretcanu and Adara M. Blaga}
     \keywords{Metallic Riemannian structure, Golden Riemannian structure, almost product structure, induced structure on submanifold, slant submanifold, semi-slant submanifold.}
     \subjclass[2010]{53B20, 53B25, 53C15}
     \numberwithin{equation}{section}
\begin{document}

     \begin{abstract}
     The aim of our paper is to focus on some properties of slant and semi-slant submanifolds of metallic Riemannian manifolds. We give some characterizations for submanifolds to be slant or semi-slant submanifolds in metallic or Golden Riemannian manifolds and we obtain integrability conditions for the distributions involved in the semi-slant submanifolds of Riemannian manifolds endowed with metallic or Golden Riemannian structures. Examples of semi-slant submanifolds of the metallic and Golden Riemannian manifolds are given.
     \end{abstract}
     \maketitle

   \section{Introduction} \label{intro}

Since B.Y. Chen defined slant submanifolds in complex manifolds (\cite{Chen3},\cite{Chen4}) in the early 1990's, the differential geometry of slant submanifolds has shown an increasing development. Then, many authors have studied slant submanifolds in different kind of manifolds, such as: slant submanifolds in almost contact metric manifolds (A. Lotta (\cite{Lotta})), in Sasakian manifolds (J.L. Cabrerizo \textit{et al.} (\cite{Cabrerizo1},\cite{Cabrerizo2})), in para-Hermitian manifold (P. Alegre, A. Carriazo (\cite{Alegre})) and in almost product Riemannian manifolds (B. Sahin (\cite{Sahin}), M. At\c{c}eken (\cite{Atceken1},\cite{Atceken2})).

The notion of slant submanifold was generalized by semi-slant submanifold, pseudo-slant submanifold and bi-slant submanifold, respectively, in different types of differentiable manifolds. The semi-slant submanifold of almost Hermitian manifold was introduced by N. Papagiuc (\cite{Papaghiuc}). A. Cariazzo \textit{et al.} (\cite{Carriazo}) defined and studied bi-slant immersion in almost Hermitian manifolds and pseudo-slant submanifold in almost Hermitian manifolds. The pseudo-slant submanifold in Kenmotsu or nearly Kenmotsu manifolds (\cite{Atceken3},\cite{Atceken4}), in LCS-manifolds (\cite{Atceken5}) or in locally decomposable Riemannian manifolds (\cite{Atceken6}) were studied by M. At\c{c}eken \textit{et al.}  Moreover, many examples of semi-slant, pseudo-slant and bi-slant submanifolds were built by most of the authors.

Semi-slant submanifolds are particular cases of bi-slant submanifolds, defined and studied by A. Cariazzo (\cite{Carriazo}). The geometry of slant and semi-slant submanifolds in metallic Riemannian manifolds is related by the properties of slant and semi-slant submanifolds in almost product Riemannian manifolds, studied in (\cite{Atceken1},\cite{Li&Liu},\cite{Sahin}).

The notion of Golden structure on a Riemannian manifold was introduced for the first time by C.E. Hretcanu and M. Crasmareanu in (\cite{CrHr}). Moreover, the authors investigated the properties of a Golden structure related to the almost product structure and of submanifolds in Golden Riemannian manifolds (\cite{Hr2},\cite{Hr3}).  Examples of Golden and product-shaped hypersurfaces in real space forms were given in (\cite{CrHrMu}). The Golden structure was generalized as metallic structures, defined on Riemannian manifolds in (\cite{Hr4}). A.M. Blaga studied the properties of the conjugate connections by a Golden structure and expressed their virtual and structural tensor fields and their behavior on invariant distributions. Also, she studied the impact of the duality between the Golden and almost product structures on Golden and product conjugate connections (\cite{Blaga}). The properties of the metallic conjugate connections were studied by A.M. Blaga and C.E. Hretcanu in (\cite{Blaga2}) where the virtual and structural tensor fields were expressed and their behavior on invariant distributions was analyzed.

Recently, the connection adapted on the almost Golden Riemannian structure was studied by F. Etayo \textit{et al.} in (\cite{Etayo}). Some properties regarding the integrability of the Golden Riemannian structures was investigated by A. Gezer \textit{et al.} in (\cite{Gezer}).

The metallic structure $J$ is a polynomial structure (which was generally defined by S.I. Goldberg \textit{et al.} in (\cite{Goldberg1},\cite{Goldberg2}), inspired by the metallic number $\sigma _{p,q}=\frac{p+\sqrt{p^{2}+4q}}{2}$, which is the positive solution of the equation $x^{2}-px-q=0$, for positive integer values of $p$ and $q$. These $\sigma _{p,q}$ numbers are members of the {\it metallic means family} or {\it metallic proportions} (as generalizations of the Golden number $\phi=\frac{1+\sqrt{5}}{2}=1.618...$), introduced by Vera W. de Spinadel (\cite{Spinadel}). Some examples of the members of the metallic means family are {\it the Silver mean, the Bronze mean, the Copper mean, the Nickel mean} and many others.

The purpose of the present paper is to investigate the properties of slant and semi-slant submanifolds in metallic (or Golden) Riemannian manifolds.
We have found a relation between the slant angles $\theta$ of a submanifold $M$ in a Riemannian manifold $(\overline{M}, \overline{g})$ endowed with a metallic (or Golden) structure $J$ and the slant angle $\vartheta$ of the same submanifold $M$ of the almost product Riemannian manifold $(\overline{M}, \overline{g}, F)$. Moreover, we have found some integrability conditions for the distributions which are involved in such types of submanifolds in metallic and Golden Riemannian manifolds. We have also given some examples of semi-slant submanifolds in metallic and Golden Riemannian manifolds.

\section{Preliminaries}

First of all we review some basic formulas and definitions for the metallic and Golden structures defined on a Riemannian manifold.

Let $\overline{M}$ be an $m$-dimensional manifold endowed with a tensor field $J$ of type $(1,1)$. We say that the structure $J$ is a \textit{metallic structure} if it verifies:
\begin{equation}\label{e1}
J^{2}= pJ+qI,
\end{equation}
for $p$, $q\in\mathbb{N}^*$, where $I$ is the identity operator on the Lie algebra $\Gamma(T\overline{M})$ of
vector fields on $\overline{M}$. In this situation, the pair $(\overline{M},J)$ is called \textit{metallic manifold}.

If $p=q=1$ one obtains the \textit{Golden structure} (\cite{CrHr}) determined by a $(1,1)$-tensor field $J$  which verifies $J^{2}= J + I$. In this case, $(\overline{M},J)$ is called {\it Golden manifold}.

Moreover, if ($\overline{M}, \overline{g})$ is a Riemannian manifold endowed with a metallic (or a Golden) structure $J$, such that the Riemannian metric $\overline{g}$ is $J$-compatible, i.e.:
\begin{equation} \label{e2}
\overline{g}(JX, Y)= \overline{g}(X, JY),
\end{equation}
for any $X, Y \in \Gamma(T\overline{M})$, then $(\overline{g},J)$ is called a {\it metallic} (or a {\it Golden) Riemannian structure}  and $(\overline{M},\overline{g},J)$ is a {\it metallic (or a Golden) Riemannian manifold}.

We can remark that:
\begin{equation} \label{e3}
\overline{g}(JX, JY)=\overline{g}(J^{2}X, Y) =p \overline{g}(JX,Y)+q \overline{g}(X,Y),
\end{equation}
for any $X, Y \in \Gamma(T\overline{M})$.
\normalfont

Any metallic structure $J$ on $\overline{M}$ induces two almost product structures on this manifold (\cite{Hr4}):
\begin{equation}\label{e4}
F_{1}= \frac{2}{2\sigma _{p, q}-p}J-\frac{p}{2\sigma _{p, q}-p}I, \quad
F_{2}= -\frac{2}{2\sigma _{p, q}-p}J+\frac{p}{2\sigma _{p, q}-p}I.
\end{equation}

Conversely, any almost product structure $F$ on $\overline{M}$ induces two metallic structures on $\overline{M}$ (\cite{Hr4}):
\begin{equation}\label{e5}
(i) \: J_{1}= \frac{2\sigma _{p, q}-p}{2}F+\frac{p}{2}I, \quad
(ii) \: J_{2}=-\frac{2\sigma _{p, q}-p}{2}F+\frac{p}{2}I.
\end{equation}

If the almost product structure $F$ is a Riemannian one, then $J_1$ and $J_2$ are also metallic Riemannian structures.

On a metallic manifold $(\overline{M}, J)$ there are two complementary distributions $\mathcal{D}_{1}$ and $\mathcal{D}_{2}$
corresponding to the projection operators $P$ and $Q$ (\cite{Hr4}), given by:
\begin{equation}\label{e6}
P=-\frac{1}{2\sigma _{p, q}-p} J+\frac{\sigma _{p, q}}{2\sigma _{p, q}-p} I, \quad Q= \frac{1}{2\sigma _{p, q}-p} J+\frac{\sigma _{p, q}-p}{2\sigma _{p, q}-p} I
\end{equation}
and the operators $P$ and $Q$ verify the following relations:
\begin{equation}\label{e7}
P+Q=I, \quad P^{2}=P, \quad Q^{2}=Q, \quad PQ = QP = 0
\end{equation}
and
\begin{equation}\label{e8}
J P=P J=(p-\sigma_{p, q})P, \ \ J Q=Q J=\sigma_{p, q} Q.
\end{equation}

In particular, if $p=q=1$, we obtain that every Golden structure $J$ on $\overline{M}$ induces two almost product structures on this manifold and conversely, an almost product structure $F$ on $\overline{M}$ induces two Golden structures on $\overline{M}$ (\cite{CrHr},\cite{Hr3}).

\section{Submanifolds of metallic Riemannian manifolds}
In the next issues we assume that $M$ is an $m'$-dimensional submanifold, isometrically immersed in the $m$-dimensional metallic (or Golden) Riemannian manifold ($\overline{M}, \overline{g},J)$ with $m, m' \in \mathbb{N}^{*}$ and $m > m'$.

We denote by $T_{x}M$ the tangent space of $M$ in a point $x \in M$ and by $T_{x}^{\bot }M$ the normal space of $M$ in $x$. The tangent space $T_x\overline{M}$ of $\overline{M}$ can be decomposed into the direct sum:
$$T_x\overline{M}=T_xM\oplus T_x^{\perp}M,$$ for any $x\in M$.

Let $i_{*}$ be the differential of the immersion $i: M \rightarrow\overline{M}$. The induced Riemannian metric $g$ on $M$ is given by $g(X, Y)=\overline{g}(i_{*}X, i_{*}Y)$, for any $X, Y \in \Gamma(TM)$. For the simplification of the notations, in the rest of the paper we shall note by $X$ the vector field $i_{*}X$, for any $X \in \Gamma(TM)$.

We consider the decomposition into the tangential and normal parts of $JX$ and $JV$ given by:
\begin{equation}\label{e13}
(i) \: JX = TX + NX, \quad (ii) \: JV = tV + nV,
\end{equation}
for any $X \in \Gamma(TM)$ and $V \in \Gamma(T^{\perp}M)$, where $T:\Gamma(TM)\rightarrow \Gamma(TM)$, $N:\Gamma(TM)\rightarrow \Gamma(T^{\perp}M)$, $t:\Gamma(T^{\perp}M)\rightarrow \Gamma(TM)$, $ n:\Gamma(T^{\perp}M)\rightarrow \Gamma(T^{\perp}M)$, with:
\begin{equation}\label{e14}
  TX:=(J X)^T, \quad NX:=(J X)^{\perp}, \quad  tV:=(J V)^T, \quad nV:=(J V)^{\perp}.
\end{equation}

We remark that the maps $T$ and $n$ are $\overline{g}$-symmetric (\cite{Blaga_Hr}):
\begin{equation}\label{e15}
(i)\: \overline{g}(TX,Y)=\overline{g}(X,TY), \quad (ii)\: \overline{g}(nU,V)=\overline{g}(U,nV)
\end{equation}
and
\begin{equation}\label{e16}
\overline{g}(NX,U)=\overline{g}(X,tU),
\end{equation}
for any $X, Y\in \Gamma(TM)$ and $U, V\in \Gamma(T^{\perp}M)$.

For an almost product structure $F$, the decompositons into tangential and normal parts of $FX$ and $FV$ are given by (\cite{Sahin}):
\begin{equation}\label{e17}
(i) \: FX = fX + \omega X, \quad (ii) \: FV = BV + CV,
\end{equation}
for any $X \in \Gamma(TM)$ and $V \in \Gamma(T^{\perp}M)$, where: $f:\Gamma(TM)\rightarrow \Gamma(TM)$, $\omega :\Gamma(TM)\rightarrow \Gamma(T^{\perp}M)$,
$B :\Gamma(T^{\perp}M)\rightarrow \Gamma(TM)$, $C:\Gamma(T^{\perp}M)\rightarrow \Gamma(T^{\perp}M)$, with:
\begin{equation}\label{e150}
fX:=(F X)^T,\quad \omega X:=(FX)^{\perp}, \quad BV:=(F V)^T, \quad CV:=(F V)^{\perp}.
\end{equation}

The maps $f$ and $C$ are $\overline{g}$-symmetric (\cite{Li&Liu}):
\begin{equation}\label{e18}
\overline{g}(fX,Y)=\overline{g}(X,fY), \ \ \overline{g}(CU,V)=\overline{g}(U,CV)
\end{equation}
 and
\begin{equation}\label{e19}
\overline{g}(\omega X,V)=\overline{g}(X,B V),
\end{equation}
for any $X, Y\in \Gamma(TM)$ and $U, V\in \Gamma(T^{\perp}M)$.

\begin{remark}
Let $(\overline{M}, \overline{g})$ be a Riemannian manifold endowed with an almost product structure $F$ and let $J$ be the metallic structure induced by $F$ on $\overline{M}$. If $M$ is a submanifold in the almost product Riemannian manifold $(\overline{M}, \overline{g}, F)$, then:
\begin{equation} \label{e20}
(i) \: TX = \frac{p}{2}X \pm \frac{2\sigma _{p, q}-p}{2}fX, \quad (ii) \: NX= \pm \frac{2\sigma _{p, q}-p}{2}\omega X
\end{equation}
and
\begin{equation} \label{e21}
(i) \: tV =  \pm \frac{2\sigma _{p, q}-p}{2}BV, \quad (ii) \: nV= \frac{p}{2}V \pm \frac{2\sigma _{p, q}-p}{2}CV,
\end{equation}
for any $X \in \Gamma(TM)$ and $V \in \Gamma(T^{\bot}M)$.
\end{remark}

\begin{remark}
Let $(\overline{M}, \overline{g})$ be a Riemannian manifold endowed with an almost product structure $F$ and let $J$ be the Golden structure induced by $F$ on $\overline{M}$. If $M$ is a submanifold in the almost product Riemannian manifold $(\overline{M}, \overline{g}, F)$, then:
\begin{equation} \label{e22}
(i) \: TX = \frac{1}{2}X \pm \frac{2\phi-1}{2}fX, \quad (ii) \: NX= \pm \frac{2\phi-1}{2}\omega X
\end{equation}
and
\begin{equation} \label{e23}
(i) \: tV = \pm \frac{2\phi-1}{2}BV, \quad (ii) \: nV= \frac{1}{2}V \pm \frac{2\phi-1}{2}CV,
\end{equation}
for any $X \in \Gamma(TM)$ and $V \in \Gamma(T^{\bot}M)$.
\end{remark}

Let $r=m-m'$ be the codimension of $M$ in $\overline{M}$ (where $r,m,m' \in \mathbb{N}^{*}$). We fixe a local orthonormal basis $\{N_{1},...,N_{r}\}$ of the normal space $T_{x}^{\bot}M$. Hereafter we assume that the indices $\alpha, \beta, \gamma $ run over the range $\{1,..., r\}$.

For any $x \in M$ and $X \in T_{x}M$, the vector fields $J(i_{*}X)$ and $JN_{\alpha}$ can be decomposed into  tangential and normal components (\cite{Hr4}):
\begin{equation}\label{e24}
(i)\: JX=TX+\sum_{\alpha=1}^{r}u_{\alpha}(X)N_{\alpha}, \:
(ii)\: JN_{\alpha}=\xi _{\alpha}+\sum_{\beta=1}^{r}a_{\alpha \beta} N_{\beta},
\end{equation}
where $(\alpha \in \{1,..., r\})$, $T$ is an $(1, 1)$-tensor field on $M$, $\xi _{\alpha}$ are vector fields on $M$, $u_{\alpha}$ are $1$-forms on $M$ and $(a_{\alpha\beta})_{r}$ is an $r \times r$ matrix of smooth real functions on $M$.

Using equations (\ref{e13}) and (\ref{e24}), we remark that
\begin{equation}\label{e25}
(i)\: NX=\sum_{\alpha=1}^{r}u_{\alpha}(X)N_{\alpha},\quad (ii)\: tN_{\alpha}=\xi _{\alpha},  \quad (iii)\: nN_{\alpha}=\sum_{\beta=1}^{r}a_{\alpha \beta} N_{\beta}.
\end{equation}

\begin{theorem} \label{t1}
The structure $\Sigma =(T, g, u_{\alpha}, \xi _{\alpha},(a_{\alpha\beta})_{r})$ induced on the submanifold $M$ by the metallic Riemannian structure $(\overline{g}, J)$ on $\overline{M}$ satisfies the following equalities (\cite{Hr5}):
\begin{equation}\label{e26}
     T^{2}X=pTX+qX-\sum_{\alpha=1}^{r}u_{\alpha}(X)\xi _{\alpha},
\end{equation}
\begin{equation}\label{e27}
  (i) \ \ u_{\alpha}(TX)=pu_{\alpha}(X)-\sum_{\beta=1}^{r}a_{\alpha\beta}u_{\beta}(X), \quad
  (ii)\  a_{\alpha\beta}=a_{\beta \alpha}, \quad
  \end{equation}
\begin{equation}\label{e28}
  (i) \ \ u_{\beta}(\xi _{\alpha})=q\delta_{\alpha \beta}+pa_{\alpha\beta}-\sum_{\gamma =1}^{r}a_{\alpha \gamma }a_{\gamma \beta}, \quad
  (ii) \  T\xi _{\alpha}=p\xi _{\alpha}-\sum_{\beta=1}^{r}a_{\alpha\beta}\xi _{\beta},\\
\end{equation}
\begin{equation} \label{e29}
      u_{\alpha}(X)=g(X, \xi_{\alpha})
\end{equation}
for any $X\in \Gamma(TM)$, where $\delta_{\alpha \beta}$ is the Kronecker delta and $p$, $q$ are positive integers \normalfont (\cite{Hr4}).
\end{theorem}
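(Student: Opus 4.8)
The plan is to exploit the defining relation $J^{2}=pJ+qI$ by applying $J$ twice, once to a tangent vector $X$ and once to a normal basis vector $N_{\alpha}$, and then to compare tangential and normal components via the decompositions (\ref{e24}). The metric compatibility (\ref{e2}) will separately deliver the symmetry of $(a_{\alpha\beta})_{r}$ and the identity (\ref{e29}).

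First I would dispose of (\ref{e29}) and the symmetry (\ref{e27})(ii), since they are quickest and are needed afterwards. For (\ref{e29}), apply (\ref{e2}) to $\overline{g}(JX,N_{\alpha})=\overline{g}(X,JN_{\alpha})$: substituting (\ref{e24})(i) on the left and (\ref{e24})(ii) on the right and using orthonormality of $\{N_{\beta}\}$ together with $TX,\xi_{\alpha}\in\Gamma(TM)$ collapses the left side to $u_{\alpha}(X)$ and the right side to $g(X,\xi_{\alpha})$. The same device applied to $\overline{g}(JN_{\alpha},N_{\beta})=\overline{g}(N_{\alpha},JN_{\beta})$ yields $a_{\alpha\beta}=a_{\beta\alpha}$, the tangential parts $\xi_{\alpha},\xi_{\beta}$ dropping out because they are orthogonal to the $N$'s.

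Next I would compute $J^{2}X=pJX+qX$ in two ways. Applying $J$ to (\ref{e24})(i) and decomposing $J(TX)$ by (\ref{e13})/(\ref{e24}) and $JN_{\alpha}$ by (\ref{e24})(ii) gives one expression, while expanding $pJX+qX$ directly gives the other. Equating tangential parts produces (\ref{e26}); equating normal parts produces $u_{\alpha}(TX)+\sum_{\beta}a_{\beta\alpha}u_{\beta}(X)=pu_{\alpha}(X)$, which becomes (\ref{e27})(i) once the symmetry $a_{\beta\alpha}=a_{\alpha\beta}$ established above is inserted. An entirely parallel computation of $J^{2}N_{\alpha}=pJN_{\alpha}+qN_{\alpha}$, starting from (\ref{e24})(ii) and using $J\xi_{\alpha}=T\xi_{\alpha}+\sum_{\beta}u_{\beta}(\xi_{\alpha})N_{\beta}$, yields (\ref{e28})(ii) from the tangential part and (\ref{e28})(i) from the normal part.

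The computations are routine; the only place demanding care is the index bookkeeping in the double sums, where summation indices must be relabeled to read off the coefficient of each basis normal field $N_{\alpha}$ (respectively $N_{\beta}$), and where the symmetry $a_{\alpha\beta}=a_{\beta\alpha}$ must be used to match the stated form of (\ref{e27})(i). I expect no genuine obstacle beyond keeping these sums straight.
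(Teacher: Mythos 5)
Your proposal is correct: decomposing $J^{2}X=pJX+qX$ and $J^{2}N_{\alpha}=pJN_{\alpha}+qN_{\alpha}$ into tangential and normal parts via (\ref{e24}), and extracting (\ref{e27})(ii) and (\ref{e29}) from the $\overline{g}$-symmetry of $J$, is exactly the standard derivation; the index bookkeeping works out as you describe (the coefficient of $N_{\alpha}$ gives $u_{\alpha}(TX)+\sum_{\beta}a_{\beta\alpha}u_{\beta}(X)=pu_{\alpha}(X)$, which the symmetry converts to the stated form). The paper itself does not prove this theorem but cites it from \cite{Hr5} and \cite{Hr4}, and your argument is the one those references use, so there is nothing to correct.
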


A structure $\Sigma =(T, g, u_{\alpha }, \xi _{\alpha }, (a_{\alpha \beta})_{r})$ induced on the submanifold $M$ by the metallic Riemannian structure $(\overline{g}, J)$ defined on $\overline{M}$ (determined by the $(1, 1)$-tensor field $T$ on $M$, the vector fields $\xi_{\alpha }$ on $M$, the $1$-forms $u_{\alpha }$ on $M$ and the $r \times r$ matrix $(a_{\alpha \beta })_{r}$ of smooth real functions on $M$) which verifies the relations (\ref{e26}),(\ref{e27}),(\ref{e28}) and (\ref{e29}) is called \textit{$\Sigma$ - metallic Riemannian structure} (\cite{Hr5}).

For $p=q=1$, the structure $\Sigma =(T, g, u_{\alpha }, \xi _{\alpha }, (a_{\alpha \beta})_{r})$ is called \textit{$\Sigma$ - Golden Riemannian structure}.

\begin{remark}\label{r1}
If $\Sigma =(T, g, u_{\alpha}, \xi _{\alpha},(a_{\alpha\beta})_{r})$ is the induced structure on the sub\-ma\-ni\-fold $M$ by the metallic (or Golden) Riemannian structure $(\overline{g}, J)$ on $\overline{M}$, then $M$ is an invariant submanifold with respect to $J$  if and only if $(M,T,g)$ is a metallic (or Golden) Riemannian manifold, whenever $T$ is non-trivial (\cite{Hr4}).
\end{remark}

Let $\overline{\nabla}$ and $\nabla $ be the Levi-Civita connections on $(\overline{M},\overline{g})$ and $(M,g)$, respectively. The Gauss and Weingarten formulas are given by:
\begin{equation}\label{e30}
(i) \: \overline{\nabla}_{X}Y=\nabla_{X}Y+h(X,Y), \quad  (ii) \: \overline{\nabla}_{X}V=-A_{V}X+\nabla_{X}^{\bot}V,
\end{equation}
for any $X, Y \in \Gamma(TM)$ and $V \in \Gamma(T^{\bot}M)$, where $h$ is the second fundamental form, $A_{V}$ is the shape operator of $M$ and the second fundamental form $h$ and the shape operator $A_{V}$ are related by:
 \begin{equation}\label{e31}
 \overline{g}(h(X, Y),V)=\overline{g}(A_{V}X, Y).
 \end{equation}

\begin{remark}
Using a local orthonormal basis $\{N_{1},...,N_{r}\}$ of the normal space $T_{x}^{\bot}M$, where $r$ is the codimension of $M$ in $\overline{M}$ and $A_{\alpha}:=A_{N_{\alpha}}$, for any $\alpha \in \{1,...,r\}$, we obtain:
\begin{equation}\label{e32}
(i) \: \overline{\nabla}_{X}N_{\alpha}=-A_{\alpha}X+\nabla_{X}^{\bot}N_{\alpha}, \quad (ii) \: h_{\alpha}(X, Y)=g(A_{\alpha}X, Y),
\end{equation}
for any $X, Y \in \Gamma(TM)$.
\end{remark}

\begin{remark}\label{r2}
For any $\alpha \in \{1,..., r\}$, the normal connection $\nabla_{X}^{\bot}N_{\alpha}$ has the decomposition
$ \nabla_{X}^{\bot}N_{\alpha}=\sum_{\beta=1}^{r}l_{\alpha\beta}(X)N_{\beta},$
for any $X \in \Gamma(TM)$, where $(l_{\alpha\beta})_{r}$ is an $r\times r$ matrix of $1$-forms on $M$.
Moreover, from $\overline{g}(N_{\alpha}, N_{\beta})= \delta_{\alpha\beta}$, we obtain (\cite{Hr5}):
$
 \overline{g}(\nabla_{X}^{\bot}N_{\alpha}, N_{\beta})+\overline{g}(N_{\alpha}, \nabla_{X}^{\bot}N_{\beta})=0,
$
 which is equivalent to $ l_{\alpha\beta}=-l_{\beta\alpha}$,  for any
 $\alpha, \beta \in \{1,..., r\}$ and any $X\in \Gamma (TM)$.
\end{remark}

The covariant derivatives of the tangential and normal parts of $JX$ (and $JV$), $T$ and $N$ ($t$ and $n$, respectively) are given by:
\begin{equation}\label{e33}
(i) \: (\nabla_{X}T)Y=\nabla_{X}TY - T(\nabla_{X}Y), \quad (ii) \:(\overline{\nabla}_{X}N)Y=\nabla_{X}^{\bot}NY - N(\nabla_{X}Y),
\end{equation}
and
\begin{equation}\label{e330}
(i) \: (\nabla_{X}t)V=\nabla_{X}tV - t(\nabla_{X}^{\bot}V), \quad (ii) \:(\overline{\nabla}_{X}n)V=\nabla_{X}^{\bot}nV - n(\nabla_{X}^{\bot}V),
\end{equation}
for any $X$, $Y \in \Gamma(TM)$ and $V \in \Gamma(T^{\bot}M) $. From $\overline{g}(JX,Y)=\overline{g}(X,JY)$, it follows:
\begin{equation} \label{e34}
\overline{g}((\overline{\nabla}_XJ)Y,Z)=\overline{g}(Y,(\overline{\nabla}_XJ)Z),
\end{equation}
for any $X$, $Y$, $Z\in \Gamma(T\overline{M})$. Moreover, if $M$ is an isometrically immersed submanifold of the metallic Riemannian manifold $(\overline{M},\overline{g},J)$, then (\cite{Blaga2}):
\begin{equation}\label{e35}
\overline{g}((\nabla_X T)Y,Z)=\overline{g}(Y,(\nabla_X T)Z),
\end{equation}
for any $X$, $Y$, $Z\in \Gamma(TM)$.

\bigskip

Using an analogy of a locally product manifold (\cite{Pitis}), we can define \textit{locally metallic (or locally Golden) Riemannian manifold}, as follows (\cite{Hr5}):

\begin{definition}\label{d2}
If $(\overline{M},\overline{g}, J)$ is a metallic (or Golden) Riemannian manifold and $J$ is parallel with respect to the Levi-Civita connection $\overline{\nabla}$ on $\overline{M}$ (i.e. $\overline{\nabla}J=0$), we say that  $(\overline{M},\overline{g}, J)$ is a {\it locally metallic (or locally Golden) Riemannian manifold}.
\end{definition}

\begin{prop}
If $M$ is a submanifold of a locally metallic (or locally Golden) Riemannian manifold $(\overline{M},\overline{g},J)$, then:
\begin{equation}\label{e36}
T([X,Y])=\nabla_{X}TY-\nabla_{Y}TX-A_{NY}X+A_{NX}Y
\end{equation}
and
\begin{equation}\label{e37}
N([X,Y])=h(X,TY)-h(TX,Y)+\nabla_{X}^{\bot}NY-\nabla_{Y}^{\bot}NX,
\end{equation}
for any $X, Y \in \Gamma(TM)$, where $\nabla$ is the Levi-Civita connection on $M$.
\end{prop}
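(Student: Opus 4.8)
The plan is to exploit the hypothesis $\overline{\nabla}J=0$ from Definition \ref{d2} by computing $\overline{\nabla}_X(JY)$ in two different ways and comparing the results. First I would write $JY = TY + NY$ according to the decomposition (\ref{e13}) and differentiate each summand using the Gauss and Weingarten formulas (\ref{e30}), which gives
$$\overline{\nabla}_X(JY) = \nabla_X TY + h(X,TY) - A_{NY}X + \nabla_X^\perp NY,$$
where $\nabla_X TY$ and $-A_{NY}X$ are the tangential contributions while $h(X,TY)$ and $\nabla_X^\perp NY$ are the normal ones.

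On the other hand, since $J$ is parallel we have $(\overline{\nabla}_X J)Y=0$, hence $\overline{\nabla}_X(JY) = J(\overline{\nabla}_X Y)$. Applying the Gauss formula to $\overline{\nabla}_X Y = \nabla_X Y + h(X,Y)$ and then decomposing both $J(\nabla_X Y)$ and $J(h(X,Y))$ via (\ref{e13}) yields
$$J(\overline{\nabla}_X Y) = T(\nabla_X Y) + t(h(X,Y)) + N(\nabla_X Y) + n(h(X,Y)).$$
Equating the two expressions for $\overline{\nabla}_X(JY)$ and separating tangential from normal parts produces two identities, one with values in $\Gamma(TM)$ and one with values in $\Gamma(T^\perp M)$.

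The final step is to antisymmetrize each identity in $X$ and $Y$, i.e. subtract from it the same identity with $X$ and $Y$ interchanged. Two facts then make the right-hand sides collapse to the claimed form. First, the second fundamental form is symmetric, so the combinations $t\bigl(h(X,Y)-h(Y,X)\bigr)$ and $n\bigl(h(X,Y)-h(Y,X)\bigr)$ vanish. Second, the Levi-Civita connection is torsion-free, so $\nabla_X Y - \nabla_Y X = [X,Y]$, which turns $T(\nabla_X Y - \nabla_Y X)$ into $T([X,Y])$ and $N(\nabla_X Y - \nabla_Y X)$ into $N([X,Y])$. The tangential identity then gives (\ref{e36}), and the normal identity gives (\ref{e37}), where in the latter I also use the symmetry $h(Y,TX) = h(TX,Y)$.

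I do not expect a serious obstacle here: the argument is essentially a direct computation once parallelism of $J$ is invoked. The only point requiring care is the bookkeeping, namely keeping the four pieces (the tangential and normal parts of each of $JY$ and $J(\overline{\nabla}_X Y)$) correctly sorted before antisymmetrizing, and recognizing that the symmetry of $h$ together with the vanishing torsion of $\nabla$ is precisely what eliminates the unwanted $t$- and $n$-terms.
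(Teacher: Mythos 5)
Your proposal is correct and follows essentially the same route as the paper: both compute $\overline{\nabla}_X(JY)$ via the Gauss--Weingarten formulas on one side and via $J(\overline{\nabla}_XY)$ on the other, split into tangential and normal parts, and then antisymmetrize in $X$ and $Y$, using the symmetry of $h$ and the torsion-freeness of $\nabla$. No gaps; the bookkeeping you describe matches the paper's equations (\ref{e38}) and (\ref{e39}) exactly.
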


 \begin{proof}
From $(\overline{M},\overline{g},J)$ locally metallic (or locally Golden) Riemannian manifold, we have $(\overline{\nabla}_{X}J)Y=0$, for any $X, Y \in \Gamma(TM)$.

Thus:
$\overline{\nabla}_{X}(TY+NY)=J(\nabla_{X}Y+h(X,Y))$, that is equivalent to:
$$ \nabla_{X}TY+h(X,TY)-A_{NY}X+\nabla_{X}^{\bot}NY=T(\nabla_{X}Y)+N(\nabla_{X}Y)+th(X,Y)+nh(X,Y).$$
Taking the normal and the the tangential components of this equality, we get:
\begin{equation}\label{e38}
N(\nabla_{X}Y)-\nabla_{X}^{\bot}NY=h(X,TY)-nh(X,Y)
\end{equation}
and
\begin{equation}\label{e39}
\nabla_{X}TY - T (\nabla_{X}Y)=A_{NY}X+th(X,Y).
\end{equation}

Interchanging $X$ and $Y$ and subtracting these equalities, we obtain the tangential and normal components of $[X,Y]=\nabla_{X}Y-\nabla_{Y}X$, which give us (\ref{e36}) and (\ref{e37}).
\end{proof}

From (\ref{e33}) and (\ref{e330}) we obtain:
\begin{prop}
If $M$ is a submanifold of a locally metallic (or Golden) Riemannian manifold $(\overline{M},\overline{g},J)$, then the covariant derivatives of $T$ and $N$ verify:
\begin{equation}\label{e40}
(i)  (\nabla_{X}T)Y=A_{NY}X+th(X,Y), \: (ii)  (\overline{\nabla}_{X}N)Y=nh(X,Y)-h(X,TY),
\end{equation}
and
\begin{equation}\label{e400}
(i)  (\nabla_{X}t)V=A_{nV}X - TA_{V}X, \: (ii)  (\overline{\nabla}_{X}n)V=-h(X,tV)-NA_{V}X,
\end{equation}
for any $X$, $Y \in \Gamma(TM)$ and $V \in \Gamma(T^{\bot}M)$.
\end{prop}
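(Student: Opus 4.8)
The plan is to treat the two displayed equations separately, since (\ref{e40}) concerns the action of $J$ on tangent vectors while (\ref{e400}) concerns its action on normal vectors. For the first pair I would observe that the work has essentially already been done in the proof of the preceding proposition. Indeed, recalling the definition (\ref{e33})(i), the identity (\ref{e39}) derived there, namely $\nabla_{X}TY-T(\nabla_{X}Y)=A_{NY}X+th(X,Y)$, is literally $(\nabla_{X}T)Y=A_{NY}X+th(X,Y)$, which is (\ref{e40})(i). Likewise, rearranging (\ref{e38}) as $\nabla_{X}^{\bot}NY-N(\nabla_{X}Y)=nh(X,Y)-h(X,TY)$ and comparing with the definition (\ref{e33})(ii) yields (\ref{e40})(ii). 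So no new computation is required here beyond reading off the tangential and normal components already separated in the earlier proof.

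For the second pair I would run the analogous argument, but now differentiating along a normal field $V$. Since $\overline{M}$ is locally metallic we have $(\overline{\nabla}_{X}J)V=0$, i.e. $\overline{\nabla}_{X}(JV)=J(\overline{\nabla}_{X}V)$ for any $X\in\Gamma(TM)$ and $V\in\Gamma(T^{\bot}M)$. I would then substitute the decomposition $JV=tV+nV$ from (\ref{e13})(ii) on the left and the Weingarten formula $\overline{\nabla}_{X}V=-A_{V}X+\nabla_{X}^{\bot}V$ from (\ref{e30})(ii) on the right. The left-hand side is expanded by applying the Gauss formula to the tangential part $tV$ and the Weingarten formula to the normal part $nV$, giving $\nabla_{X}tV+h(X,tV)-A_{nV}X+\nabla_{X}^{\bot}nV$. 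On the right-hand side I would apply $J$ term by term using (\ref{e13}): since $A_{V}X$ is tangent, $J(A_{V}X)=TA_{V}X+NA_{V}X$, while $J(\nabla_{X}^{\bot}V)=t(\nabla_{X}^{\bot}V)+n(\nabla_{X}^{\bot}V)$.

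The final step is to separate the resulting identity into its tangential and normal constituents. Collecting tangential parts gives $\nabla_{X}tV-A_{nV}X=-TA_{V}X+t(\nabla_{X}^{\bot}V)$, and moving $t(\nabla_{X}^{\bot}V)$ across reproduces the covariant derivative $(\nabla_{X}t)V=\nabla_{X}tV-t(\nabla_{X}^{\bot}V)$ from (\ref{e330})(i), yielding (\ref{e400})(i). Collecting normal parts gives $h(X,tV)+\nabla_{X}^{\bot}nV=-NA_{V}X+n(\nabla_{X}^{\bot}V)$, and rewriting via (\ref{e330})(ii) produces (\ref{e400})(ii).

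I expect the only real obstacle to be bookkeeping: keeping straight which summands are tangential and which are normal, and tracking the signs introduced by the Weingarten formula in the terms $A_{V}X$ and $A_{nV}X$. There is no genuine analytic difficulty, since every ingredient --- the parallelism of $J$, the Gauss and Weingarten formulas, and the tangential/normal splittings of (\ref{e13}) --- is already available; the whole proof amounts to projecting a single identity onto $TM$ and $T^{\bot}M$.
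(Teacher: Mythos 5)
Your proposal is correct and matches the paper's (implicit) argument: the paper derives (\ref{e40}) directly from the tangential and normal components (\ref{e38})--(\ref{e39}) already isolated in the preceding proof, and (\ref{e400}) from the analogous splitting of $\overline{\nabla}_{X}(JV)=J(\overline{\nabla}_{X}V)$ via the Gauss and Weingarten formulas, exactly as you describe. All signs in your computation check out against (\ref{e33}) and (\ref{e330}).
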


\begin{prop} \label{t2}
 If $M$ is an $n$-dimensional submanifold of codimension $r$ in a locally metallic (or locally Golden) Riemannian manifold $(\overline{M},\overline{g}, J)$, then the structure  $\Sigma =(T, g, u_{\alpha}, \xi _{\alpha},(a_{\alpha\beta})_{r})$ induced on $M$ by the metallic (or Golden) Riemannian structure $(\overline{g}, J)$, has the following properties (\cite{Hr5}):

\begin{equation} \label{e41}
(\nabla_{X}T)Y=\sum_{\alpha=1}^{r}h_{\alpha}(X, Y)\xi _{\alpha}+\sum_{\alpha=1}^{r}u_{\alpha}(Y)A_{\alpha}X,
\end{equation}

\begin{equation}\label{e42}
(\nabla_{X}u_{\alpha})Y=-h_{\alpha}(X, TY)+\sum_{\beta=1}^{r}[u_{\beta}(Y)l_{\alpha\beta}(X)+h_{\beta}(X, Y)a_{\beta\alpha}],
\end{equation}
for any $X, Y \in \Gamma(TM)$.
\end{prop}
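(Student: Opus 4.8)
The plan is to reduce everything to the two intrinsic identities already established in (\ref{e40}), namely $(\nabla_X T)Y = A_{NY}X + th(X,Y)$ and $(\overline{\nabla}_X N)Y = nh(X,Y) - h(X,TY)$, and then simply resolve each side into the fixed orthonormal normal frame $\{N_1,\dots,N_r\}$ using the component identities (\ref{e25}), (\ref{e31}), (\ref{e32}) and Remark \ref{r2}.

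For the first identity (\ref{e41}), I would decompose the two terms on the right of (\ref{e40})(i) into frame components. By (\ref{e25})(i) the normal part of $JY$ is $NY = \sum_\alpha u_\alpha(Y) N_\alpha$, so by the $C^\infty(M)$-linearity of the shape operator in its normal argument together with $A_\alpha := A_{N_\alpha}$ one gets $A_{NY}X = \sum_\alpha u_\alpha(Y) A_\alpha X$. For the second term I would write $h(X,Y) = \sum_\alpha h_\alpha(X,Y) N_\alpha$, which is legitimate since, by (\ref{e31}) and (\ref{e32})(ii), the $N_\alpha$-component of $h(X,Y)$ equals $\overline{g}(A_\alpha X, Y) = h_\alpha(X,Y)$; applying $t$ termwise and using $tN_\alpha = \xi_\alpha$ from (\ref{e25})(ii) yields $th(X,Y) = \sum_\alpha h_\alpha(X,Y)\xi_\alpha$. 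Adding the two pieces gives (\ref{e41}) at once.

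For the second identity (\ref{e42}), the idea is to expand $(\overline{\nabla}_X N)Y = \nabla_X^\perp NY - N(\nabla_X Y)$ from (\ref{e33})(ii) in the frame and match the coefficient of each $N_\alpha$ against the right-hand side of (\ref{e40})(ii). Differentiating $NY = \sum_\gamma u_\gamma(Y)N_\gamma$ and invoking the normal-connection coefficients $\nabla_X^\perp N_\gamma = \sum_\beta l_{\gamma\beta}(X)N_\beta$ from Remark \ref{r2} produces, for the coefficient of $N_\alpha$, the combination $X(u_\alpha(Y)) - u_\alpha(\nabla_X Y) + \sum_\beta u_\beta(Y) l_{\beta\alpha}(X)$, whose first two terms assemble into $(\nabla_X u_\alpha)Y$. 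On the other side, expanding $nh(X,Y)$ via $nN_\beta = \sum_\alpha a_{\beta\alpha}N_\alpha$ from (\ref{e25})(iii) and writing $h(X,TY) = \sum_\alpha h_\alpha(X,TY)N_\alpha$ gives the $N_\alpha$-coefficient $\sum_\beta h_\beta(X,Y)a_{\beta\alpha} - h_\alpha(X,TY)$. Equating the two and isolating $(\nabla_X u_\alpha)Y$ leaves a residual term $-\sum_\beta u_\beta(Y) l_{\beta\alpha}(X)$, which I would rewrite using the skew-symmetry $l_{\beta\alpha} = -l_{\alpha\beta}$ from Remark \ref{r2}; this sign flip is exactly what converts the expression into the stated $+\sum_\beta u_\beta(Y) l_{\alpha\beta}(X)$ and completes (\ref{e42}).

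The computation is essentially bookkeeping once (\ref{e40}) is in hand, so there is no serious conceptual obstacle. The one point requiring care is the second identity, where differentiating the moving normal frame forces the connection forms $l_{\alpha\beta}$ to enter, and one must remember to apply their skew-symmetry to match the index convention used in (\ref{e42}). Keeping the frame expansions of $h(X,Y)$ and $NY$ consistent throughout is what makes the coefficient-matching clean.
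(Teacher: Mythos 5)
Your proof is correct, and it is essentially the paper's argument reorganized: the paper expands $\overline{\nabla}_XJY=J(\overline{\nabla}_XY)$ directly in the normal frame via the Gauss--Weingarten formulas and identifies tangential and normal components, whereas you reach the same coefficient identities by first invoking the frame-free formulas (\ref{e40}) and only then decomposing $NY$, $h(X,Y)$, $tN_\alpha$ and $nN_\alpha$ in the basis $\{N_1,\dots,N_r\}$. Both routes rest on the same ingredients -- $\overline{\nabla}J=0$, the expansions (\ref{e25}), and the skew-symmetry $l_{\alpha\beta}=-l_{\beta\alpha}$ from Remark \ref{r2} to fix the sign of the connection-form term in (\ref{e42}) -- so your version is a legitimate economy rather than a different method.
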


\begin{proof}
From $\overline{\nabla} J = 0 $ we obtain
$\overline{\nabla}_{X} J Y = J(\overline{\nabla}_{X}Y)$,
for any $X,Y \in \Gamma(T\overline{M})$.
Using (\ref{e30})(i), (\ref{e32}) and (\ref{e24})(ii), we get:
\[ \overline{\nabla}_{X}J Y = \overline{\nabla}_{X}TY+
\sum_{\alpha=1}^{r} X(u_{\alpha}(Y))N_{\alpha}+\sum_{\alpha=1}^{r} u_{\alpha}(Y)\overline{\nabla}_{X}N_{\alpha}=\]
\[=\nabla_{X}TY-\sum_{\alpha=1}^{r} u_{\alpha}(Y)A_{\alpha}X+\sum_{\alpha=1}^{r} [h_{\alpha}(X,TY)+X(u_{\alpha}(Y))
+\sum_{\beta=1}^{r} u_{\beta}(Y)l_{\beta\alpha}(X)]N_{\alpha}\]
and
\[ J (\overline{\nabla}_{X}Y)=J (\nabla_{X}Y)+\sum_{\alpha=1}^{r} h_{\alpha}(X,Y)J N_{\alpha}=\]
\[=T(\nabla_{X}Y)+ \sum_{\alpha=1}^{r} h_{\alpha}(X,Y)\xi_{\alpha}+\sum_{\alpha=1}^{r} [u_{\alpha}(\nabla_{X}Y)+\sum_{\beta=1}^{r} h_{\beta}(X,Y)a_{\beta\alpha}]N_{\alpha} ,\]
for any $X,Y \in \Gamma(TM)$.

Identifying the tangential and normal components, respectively, of the last two equalities, we get (\ref{e41}) and (\ref{e42}).
\end{proof}

Using (\ref{e36}), (\ref{e37}), (\ref{e41}) and (\ref{e42}), we obtain:
\begin{prop}
If $M$ is a submanifold of a locally metallic (or Golden) Riemannian manifold $(\overline{M},\overline{g},J)$, then:
\begin{equation}\label{e43}
T([X,Y])=\nabla_{X}TY-\nabla_{Y}TX-\sum_{i=1}^{r}[u_{\alpha}(Y)A_{\alpha}X-u_{\alpha}(X)A_{\alpha}Y]
\end{equation}
and
\begin{equation}\label{e44}
N([X,Y])=\sum_{\alpha=1}^{n}[((\nabla_{Y}u_{\alpha})X-(\nabla_{X}u_{\alpha})Y)+
(u_{\alpha}(X)l_{\alpha\beta}(Y)-u_{\alpha}(Y)l_{\alpha\beta}(X))]N_{\alpha},
\end{equation}
for any $X, Y \in \Gamma(TM)$, where $\nabla$ is the Levi-Civita connection on $M$.
\end{prop}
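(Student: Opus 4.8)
The plan is to derive both identities by substituting the formulas already obtained in (\ref{e36})--(\ref{e37}) and (\ref{e41})--(\ref{e42}) and then separating the tangential from the normal data in the fixed orthonormal normal frame $\{N_{1},\dots,N_{r}\}$.

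For (\ref{e43}) I would start from (\ref{e36}), where the only terms not yet in the desired shape are the two shape-operator terms $-A_{NY}X+A_{NX}Y$. Using the decomposition $NY=\sum_{\alpha}u_{\alpha}(Y)N_{\alpha}$ from (\ref{e25})(i) and the $C^{\infty}(M)$-linearity of the shape operator in its subscript, together with the abbreviation $A_{\alpha}=A_{N_{\alpha}}$, I get $A_{NY}X=\sum_{\alpha}u_{\alpha}(Y)A_{\alpha}X$ and $A_{NX}Y=\sum_{\alpha}u_{\alpha}(X)A_{\alpha}Y$. Substituting these into (\ref{e36}) and collecting the single sum gives $-\sum_{\alpha}[u_{\alpha}(Y)A_{\alpha}X-u_{\alpha}(X)A_{\alpha}Y]$, which is precisely (\ref{e43}); this half is essentially immediate.

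For (\ref{e44}) I would project the identity (\ref{e37}) onto each $N_{\alpha}$. The ingredients are $h(X,Y)=\sum_{\alpha}h_{\alpha}(X,Y)N_{\alpha}$ with $h_{\alpha}$ symmetric by (\ref{e32})(ii) and the self-adjointness of $A_{\alpha}$; the relation $\nabla_{X}^{\bot}N_{\alpha}=\sum_{\beta}l_{\alpha\beta}(X)N_{\beta}$ of Remark \ref{r2}, which lets me read off the $N_{\alpha}$-component of $\nabla_{X}^{\bot}NY$; and (\ref{e42}), which I use to trade the terms $h_{\alpha}(X,TY)$ and $h_{\alpha}(TX,Y)=h_{\alpha}(Y,TX)$ for the covariant derivatives $(\nabla_{X}u_{\alpha})Y$ and $(\nabla_{Y}u_{\alpha})X$. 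The surviving antisymmetric combination $(\nabla_{Y}u_{\alpha})X-(\nabla_{X}u_{\alpha})Y$ is what one expects; the auxiliary terms must then be shown to cancel.

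The main obstacle is the index bookkeeping and the two cancellation mechanisms hidden in the second identity. After inserting (\ref{e42}), the terms weighted by the matrix $(a_{\beta\alpha})$ appear through the difference $h_{\beta}(X,Y)-h_{\beta}(Y,X)$ and vanish by the symmetry of the second fundamental form, while the connection contributions organize themselves into sums weighted by $l_{\alpha\beta}$ whose collapse hinges on the skew-symmetry $l_{\alpha\beta}=-l_{\beta\alpha}$ of Remark \ref{r2}. The delicate point is to relabel consistently the summation indices coming from (\ref{e42}) and from $\nabla^{\bot}N_{\alpha}$ so that they are attached to the same frame vector $N_{\alpha}$ before these symmetries are invoked; once that is done, reassembling the $N_{\alpha}$-components and summing over $\alpha$ yields (\ref{e44}). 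No conceptual difficulty remains beyond this careful matching of tangential against normal components.
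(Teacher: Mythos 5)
Your treatment of (\ref{e43}) is correct and is exactly the route the paper intends: substitute $NY=\sum_{\alpha}u_{\alpha}(Y)N_{\alpha}$ from (\ref{e25})(i) into (\ref{e36}) and use the tensoriality of $V\mapsto A_{V}$. Nothing more is needed for that half.

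For (\ref{e44}) there is a genuine gap: the bookkeeping you describe, carried to the end, does not land on the displayed formula. From (\ref{e42}) and the symmetry of $h_{\beta}$ (which kills the $a_{\beta\alpha}$-terms, as you note) one gets
\[
h_{\alpha}(X,TY)-h_{\alpha}(TX,Y)=(\nabla_{Y}u_{\alpha})X-(\nabla_{X}u_{\alpha})Y+\sum_{\beta}\bigl(u_{\beta}(Y)l_{\alpha\beta}(X)-u_{\beta}(X)l_{\alpha\beta}(Y)\bigr),
\]
but the remaining piece of (\ref{e37}) expands as
\[
\nabla_{X}^{\bot}NY-\nabla_{Y}^{\bot}NX=\sum_{\alpha}\Bigl[X(u_{\alpha}(Y))-Y(u_{\alpha}(X))+\sum_{\beta}\bigl(u_{\beta}(Y)l_{\beta\alpha}(X)-u_{\beta}(X)l_{\beta\alpha}(Y)\bigr)\Bigr]N_{\alpha}.
\]
By the skew-symmetry $l_{\alpha\beta}=-l_{\beta\alpha}$ of Remark \ref{r2}, the two families of $l$-terms cancel \emph{against each other} rather than surviving into the answer, and what remains is
\[
N([X,Y])=\sum_{\alpha}\bigl[(\nabla_{Y}u_{\alpha})X-(\nabla_{X}u_{\alpha})Y+X(u_{\alpha}(Y))-Y(u_{\alpha}(X))\bigr]N_{\alpha}=\sum_{\alpha}u_{\alpha}([X,Y])N_{\alpha},
\]
i.e.\ a tautology, not (\ref{e44}). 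To reach the printed (\ref{e44}) you would additionally need $X(u_{\alpha}(Y))-Y(u_{\alpha}(X))=\sum_{\beta}\bigl(u_{\beta}(X)l_{\beta\alpha}(Y)-u_{\beta}(Y)l_{\beta\alpha}(X)\bigr)$, which fails in general. In fact the right-hand side of (\ref{e44}) (whose index $\beta$ is left dangling and whose sum should run to $r$, not $n$) is exactly $(\overline{\nabla}_{Y}N)X-(\overline{\nabla}_{X}N)Y$, and this differs from $N([X,Y])$ by $\nabla_{X}^{\bot}NY-\nabla_{Y}^{\bot}NX$. So the step where you assert that "the connection contributions organize themselves into sums weighted by $l_{\alpha\beta}$" in the final expression is precisely where the argument breaks; you should either supply the missing identity (which is not available) or observe that (\ref{e44}) as stated does not follow from (\ref{e37}) and (\ref{e42}).
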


\section{Slant submanifolds in metallic or Golden Riemannian manifolds}

Let $M$ be an $m'$-dimensional submanifold, isometrically immersed in an $m$-dimensional metallic (or Golden) Riemannian manifold ($\overline{M}, \overline{g}, J)$, where $m,m' \in \mathbb{N}^{*}$ and $m > m'$. Using the Cauchy-Schwartz inequality (\cite{Alegre}), we have:
$$ \overline{g}(JX,TX) \leq \| JX\| \cdot \| TX\|,$$
for any $X \in \Gamma(TM)$. Thus, there exists a function $\theta:\Gamma(TM)\rightarrow [0,\pi]$, such that:
$$\overline{g}(JX_x,TX_x)=\cos\theta(X_x) \| TX_x\| \cdot \| JX_x \|,$$ for any $x\in M$ and any nonzero tangent vector $X_x\in T_xM$. The angle $\theta(X_x)$ between $JX_x$ and $T_xM$ is called the \textit{Wirtinger angle} of $X$ and it verifies:
$$
\cos\theta(X_x) =\frac{\overline{g}(JX_x,TX_x)}{\| TX_x\| \cdot \| JX_x \|}.
$$

\begin{definition}\label{d3}(\cite{Blaga_Hr})
A submanifold $M$ in a metallic (or Golden) Riemannian manifold ($\overline{M}, \overline{g}, J)$ is called \textit{slant submanifold} if the angle $\theta(X_x)$ between $JX_x$ and $T_xM$ is constant, for any $x\in M$ and $X_x\in T_xM$. In such a case, $\theta=:\theta(X_x)$ is called the \textit{slant angle} of $M$ in $\overline{M}$, and it verifies:
 \begin{equation}\label{e45}
\cos\theta =\frac{\overline{g}(JX,TX)}{\| JX \| \cdot \| TX \|}=\frac{\| TX \|}{\| JX \|}.
\end{equation}
 The immersion $i: M \rightarrow \overline{M} $ is named {\it slant immersion} of $M$ in $\overline{M}$.
\end{definition}
\normalfont

\begin{remark}\label{r3}
 The invariant and anti-invariant submanifolds in the metallic (or Golden) Riemannian manifold ($\overline{M}, \overline{g}, J)$ are particular cases of slant submanifolds with the slant angle $\theta=0$ and $\theta=\frac{\pi}{2}$, respectively. A slant submanifold $M$ in $\overline{M}$, which is neither invariant nor anti-invariant, is called  \textit{proper slant submanifold} and the immersion $i: M \rightarrow \overline{M} $ is called \textit{proper slant immersion}.
\end{remark}

Taking $X+Y$ in $\overline{g}(TX,TX)=\cos^2 \theta \overline{g}(JX,JX)$ we obtain:
\begin{prop}\label{p1} (\cite{Blaga_Hr})
Let $M$ be an isometrically immersed submanifold of the metallic Riemannian manifold $(\overline{M}, \overline{g}, J)$. If $M$ is a slant submanifold with the slant angle $\theta$, then:
\begin{equation}\label{e46}
\overline{g}(TX,TY)=\cos^2\theta[p\overline{g}(X,TY)+q \overline{g}(X,Y)]
\end{equation}
and
\begin{equation}\label{e47}
\overline{g}(NX,NY)=\sin^2\theta[p\overline{g}(X,TY)+q\overline{g}(X,Y)],
\end{equation}
for any $X$, $Y\in \Gamma(TM)$.

Moreover, we have
\begin{equation}\label{e48}
T^2=\cos^2\theta(pT+qI),
\end{equation}
where $I$ is the identity on $\Gamma(TM)$ and
\begin{equation}\label{e49}
\nabla(T^2)=p\cos^2\theta(\nabla T).
\end{equation}
\end{prop}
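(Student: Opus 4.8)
The plan is to start from the defining relation of the slant angle and polarize it. Squaring the slant condition (\ref{e45}) gives $\overline{g}(TX,TX)=\cos^2\theta\,\overline{g}(JX,JX)$ for every $X\in\Gamma(TM)$, since $\cos\theta=\|TX\|/\|JX\|$ is constant. Replacing $X$ by $X+Y$, expanding both sides using the bilinearity of $\overline{g}$ and the linearity of $T$ and $J$, and then subtracting off the instances of the identity written for $X$ and for $Y$ separately, the diagonal terms cancel and I am left with the polarized identity $\overline{g}(TX,TY)=\cos^2\theta\,\overline{g}(JX,JY)$, valid for all $X,Y\in\Gamma(TM)$. This is precisely the step the authors indicate by "taking $X+Y$".

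The next step is to rewrite $\overline{g}(JX,JY)$ in purely tangential terms. From (\ref{e3}) I have $\overline{g}(JX,JY)=p\,\overline{g}(JX,Y)+q\,\overline{g}(X,Y)$, and since $Y\in\Gamma(TM)$ while $NX\in\Gamma(T^{\perp}M)$, the decomposition (\ref{e13})(i) together with the $\overline{g}$-symmetry of $T$ from (\ref{e15})(i) gives $\overline{g}(JX,Y)=\overline{g}(TX,Y)=\overline{g}(X,TY)$; substituting this into the polarized identity yields (\ref{e46}). For (\ref{e47}) I instead expand $\overline{g}(JX,JY)$ directly through (\ref{e13})(i): writing $JX=TX+NX$ and $JY=TY+NY$, the mixed terms $\overline{g}(TX,NY)$ and $\overline{g}(NX,TY)$ vanish because tangent and normal parts are $\overline{g}$-orthogonal, so $\overline{g}(JX,JY)=\overline{g}(TX,TY)+\overline{g}(NX,NY)$. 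Combining this with (\ref{e46}) and the expression for $\overline{g}(JX,JY)$ just obtained gives $\overline{g}(NX,NY)=(1-\cos^2\theta)[p\,\overline{g}(X,TY)+q\,\overline{g}(X,Y)]$, which is (\ref{e47}).

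To obtain (\ref{e48}) I reinterpret (\ref{e46}) as an equality of scalar products: by the $\overline{g}$-symmetry of $T$ the left-hand side equals $\overline{g}(X,T^2Y)$, while the right-hand side equals $\overline{g}(X,\cos^2\theta(pTY+qY))$, so $\overline{g}(X,T^2Y-\cos^2\theta(pTY+qY))=0$ for every $X\in\Gamma(TM)$; nondegeneracy of $g$ then forces $T^2=\cos^2\theta(pT+qI)$. Finally, (\ref{e49}) follows by taking the covariant derivative of (\ref{e48}): since $M$ is slant the angle $\theta$, hence $\cos^2\theta$, is constant, and $\nabla I=0$, so differentiating through the operator derivative (\ref{e33})(i) leaves only $\nabla(T^2)=\cos^2\theta(p\,\nabla T)=p\cos^2\theta\,(\nabla T)$.

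None of these steps is genuinely difficult; the argument is a chain of polarizations and orthogonal projections. The only points that require care are the bookkeeping of which vectors are tangent versus normal, so that exactly the right cross terms vanish, and the observation that $\cos^2\theta$ passes through the covariant derivative as a constant in the last step. That constancy is exactly the slant hypothesis, and without it (\ref{e49}) would acquire an additional term involving $\nabla(\cos^2\theta)$.
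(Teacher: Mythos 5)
Your proof is correct and follows exactly the route the paper indicates: it polarizes the slant identity $\overline{g}(TX,TX)=\cos^2\theta\,\overline{g}(JX,JX)$ by ``taking $X+Y$'', then uses (\ref{e3}), the orthogonality of tangential and normal parts, the $\overline{g}$-symmetry of $T$, and the constancy of $\theta$ to derive (\ref{e46})--(\ref{e49}). The paper only sketches this argument in the sentence preceding the proposition, and your write-up supplies precisely the omitted details.
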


\begin{remark}
From (\ref{e26}) and (\ref{e48}), we have:
\begin{equation}\label{e50}
\sin^2 \theta (pT+qI) =\sum_{\alpha=1}^{r}u_{\alpha}\otimes \xi _{\alpha}.
\end{equation}
where $I$ is the identity on $\Gamma(TM)$.
\end{remark}

\begin{prop}
If $M$ is an isometrically immersed slant submanifold  of the Golden Riemannian manifold $(\overline{M}, \overline{g}, J)$ with the slant angle $\theta$, then:
\begin{equation}\label{e51}
\overline{g}(TX,TY)=\cos^2\theta[\overline{g}(X,TY)+ \overline{g}(X,Y)]
\end{equation}
and
\begin{equation}\label{e52}
\overline{g}(NX,NY)=\sin^2\theta[\overline{g}(X,TY)+\overline{g}(X,Y)],
\end{equation}
for any $X$, $Y\in \Gamma(TM)$. Moreover, we have
\begin{equation}\label{e53}
T^2=\cos^2\theta(T+I),\quad \nabla(T^2)=\cos^2\theta(\nabla T),
\end{equation}
 and
\begin{equation}\label{e54}
\sin^2 \theta (T+I) =\sum_{\alpha=1}^{r}u_{\alpha}\otimes \xi _{\alpha},
\end{equation}
where $I$ is the identity on $\Gamma(TM)$.
\end{prop}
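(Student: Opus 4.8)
The plan is to exploit the fact that a Golden Riemannian structure is exactly a metallic Riemannian structure with $p=q=1$: the defining relation $J^2=J+I$ is $J^2=pJ+qI$ with $p=q=1$, and the compatibility (\ref{e2}) is unchanged. Consequently, the whole statement is the $p=q=1$ specialization of Proposition \ref{p1} together with the Remark yielding (\ref{e50}). Concretely, (\ref{e51}), (\ref{e52}), the two equalities in (\ref{e53}), and (\ref{e54}) are obtained by substituting $p=q=1$ into (\ref{e46}), (\ref{e47}), (\ref{e48})--(\ref{e49}), and (\ref{e50}), respectively. I would present this substitution as the main line of argument and, for completeness, record the self-contained derivation below so that the reader sees no hypothesis beyond $p=q=1$ is used.

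For the self-contained derivation I would start from the squared form of the slant identity (\ref{e45}), namely $\overline{g}(TX,TX)=\cos^2\theta\,\overline{g}(JX,JX)$. Using (\ref{e3}) at $p=q=1$, i.e. $\overline{g}(JX,JX)=\overline{g}(JX,X)+\overline{g}(X,X)$, and noting that the normal part $NX$ drops out against the tangent vector $X$ so that $\overline{g}(JX,X)=\overline{g}(TX,X)$, this becomes $\overline{g}(TX,TX)=\cos^2\theta[\overline{g}(X,TX)+\overline{g}(X,X)]$. Polarizing (replacing $X$ by $X+Y$, expanding by bilinearity, and using the $\overline{g}$-symmetry of $T$ from (\ref{e15})(i)) then yields (\ref{e51}). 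This polarization is the only place where a genuine computation is required; it is the mild technical point of the argument, and the symmetry of $T$ is precisely what collapses the mixed terms into the symmetric right-hand side.

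Formula (\ref{e52}) then follows from the orthogonal splitting $\overline{g}(JX,JY)=\overline{g}(TX,TY)+\overline{g}(NX,NY)$ (tangent and normal parts are orthogonal) together with $\overline{g}(JX,JY)=\overline{g}(X,TY)+\overline{g}(X,Y)$ from (\ref{e3}) at $p=q=1$: subtracting (\ref{e51}) and using $1-\cos^2\theta=\sin^2\theta$ gives exactly (\ref{e52}). For the first equality in (\ref{e53}), I would rewrite (\ref{e51}) as $\overline{g}(T^2X,Y)=\overline{g}(\cos^2\theta(TX+X),Y)$ using the symmetry of $T$, then invoke nondegeneracy of $g$ to drop $Y$ and obtain $T^2=\cos^2\theta(T+I)$. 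The second equality in (\ref{e53}) is immediate: since $\theta$ is constant and $\nabla I=0$, applying $\nabla$ to the first equality gives $\nabla(T^2)=\cos^2\theta(\nabla T)$.

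Finally, (\ref{e54}) is obtained by comparing the two expressions for $T^2$. Equation (\ref{e26}) at $p=q=1$ reads $T^2X=TX+X-\sum_{\alpha=1}^{r}u_\alpha(X)\xi_\alpha$, while the first identity in (\ref{e53}) gives $T^2X=\cos^2\theta(TX+X)$. Equating and solving for the sum yields $\sum_{\alpha=1}^{r}u_\alpha(X)\xi_\alpha=(1-\cos^2\theta)(TX+X)=\sin^2\theta(T+I)X$, which is (\ref{e54}) written tensorially. The only point requiring care is the bookkeeping between the operator identity (\ref{e53}) and the induced-structure identity (\ref{e26}); once $p=q=1$ is substituted consistently everything collapses to the stated formulas, so I do not anticipate any genuine obstacle in this proposition.
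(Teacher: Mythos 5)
Your proof is correct and follows essentially the same route as the paper: the proposition is just the $p=q=1$ specialization of the metallic case (Proposition~\ref{p1} and the remark giving (\ref{e50})), which the paper itself obtains by the same polarization of $\overline{g}(TX,TX)=\cos^2\theta\,\overline{g}(JX,JX)$ and the comparison of (\ref{e26}) with (\ref{e48}). Your self-contained derivation merely fills in the details the paper leaves implicit, and all steps check out.
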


\begin{definition}\label{d33}(\cite{Atceken1})
The submanifold $M$ in the almost product Riemannian manifold ($\overline{M}, \overline{g}, F)$ is a \textit{slant submanifold} if the angle $\vartheta(X_x)$ between $JX_x$ and $T_xM$ is constant, for any $x\in M$ and $X_x\in T_xM$. In such a case, $\vartheta=:\vartheta(X_x)$ is called the \textit{slant angle} of the submanifold $M$ in $\overline{M}$ and it verifies:
 \begin{equation}\label{e55}
\cos\vartheta =\frac{\overline{g}(FX,fX)}{\| FX \| \cdot \| fX \|}=\frac{\| fX \|}{\| FX \|}.
\end{equation}
\end{definition}

\begin{prop}\label{p33}(\cite{Li&Liu})
If $M$ is a slant submanifold isometrically immersed in the almost product Riemannian manifold $(\overline{M}, \overline{g}, F)$ with the slant angle $\vartheta$, then:
\begin{equation}\label{e56}
(i)\: \overline{g}(fX,fY)=\cos^2\vartheta \overline{g}(X,Y), \:(ii)\:\overline{g}(\omega X,\omega Y)=\sin^2\vartheta \overline{g}(X,Y),
\end{equation}
for any $X, Y\in \Gamma(TM)$.
\end{prop}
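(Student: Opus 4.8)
The plan is to mimic the polarization argument that yielded Proposition \ref{p1}, now working with the almost product structure $F$ in place of $J$ and with $f,\omega$ in place of $T,N$. First I would record the two structural facts I need about a Riemannian almost product structure: $F^2=I$ and the $\overline{g}$-compatibility $\overline{g}(FX,FY)=\overline{g}(X,Y)$, equivalently $\overline{g}(FX,Y)=\overline{g}(X,FY)$, the analogue of (\ref{e2}). Since $fX=(FX)^T$ and $\omega X=(FX)^{\perp}$ are orthogonal, the numerator in (\ref{e55}) simplifies to $\overline{g}(FX,fX)=\overline{g}(fX+\omega X,fX)=\|fX\|^2$, which is precisely why $\cos\vartheta=\|fX\|/\|FX\|$.

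Next I would turn the constancy of the slant angle into a pointwise quadratic identity. From $\cos\vartheta=\|fX\|/\|FX\|$ being constant I obtain $\overline{g}(fX,fX)=\cos^2\vartheta\,\overline{g}(FX,FX)$, and then $\overline{g}(FX,FX)=\overline{g}(F^2X,X)=\overline{g}(X,X)$ reduces this to $\overline{g}(fX,fX)=\cos^2\vartheta\,\overline{g}(X,X)$. To upgrade this diagonal relation to the full bilinear identity $(i)$, I would substitute $X+Y$ for $X$, expand using the linearity of $f$ and the symmetry of $\overline{g}$, and cancel the two diagonal terms $\overline{g}(fX,fX)$ and $\overline{g}(fY,fY)$; the surviving cross terms give $2\,\overline{g}(fX,fY)=2\cos^2\vartheta\,\overline{g}(X,Y)$, hence $(i)$.

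Finally, for $(ii)$ I would exploit the orthogonality of the tangential and normal parts once more: from $FX=fX+\omega X$ I get $\overline{g}(FX,FY)=\overline{g}(fX,fY)+\overline{g}(\omega X,\omega Y)$, while $\overline{g}(FX,FY)=\overline{g}(X,Y)$ by compatibility. Subtracting $(i)$ yields $\overline{g}(\omega X,\omega Y)=(1-\cos^2\vartheta)\,\overline{g}(X,Y)=\sin^2\vartheta\,\overline{g}(X,Y)$. I do not anticipate a genuine obstacle here, since the argument is purely algebraic once $F^2=I$ and the compatibility are in hand; the only point requiring care is the justification that constancy of $\vartheta$ legitimately produces the quadratic identity before polarizing, i.e. that squaring and clearing the nonzero norms is valid for every nonzero tangent vector and extends by linearity, which is exactly the step to be handled cleanly, as in the proof of Proposition \ref{p1}.
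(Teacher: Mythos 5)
Your argument is correct: the paper gives no proof of Proposition \ref{p33} (it is quoted from \cite{Li&Liu}), but your polarization of the diagonal identity $\overline{g}(fX,fX)=\cos^2\vartheta\,\overline{g}(FX,FX)$ followed by $\overline{g}(FX,FY)=\overline{g}(fX,fY)+\overline{g}(\omega X,\omega Y)=\overline{g}(X,Y)$ is exactly the ``take $X+Y$'' scheme the paper uses for the analogous metallic-case Proposition \ref{p1}, with $F^2=I$ replacing $J^2=pJ+qI$. So the proposal matches the paper's (implicit) approach and is complete.
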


In the next proposition we find a relation between the slant angles $\theta$ of the submanifold $M$ in the metallic Riemannian manifold $(\overline{M}, \overline{g}, J)$ and the slant angle $\vartheta$ of the submanifold $M$ in the almost product Riemannian manifold $(\overline{M}, \overline{g}, F)$.

\begin{theorem}
Let $M$ be a submanifold in the Riemannian manifold $(\overline{M}, \overline{g}$) endowed with an almost product structure $F$ on $\overline{M}$ and let $J$ be the induced metallic structure by $F$ on $(\overline{M}, \overline{g}$). If $M$ is a slant submanifold in the almost product Riemannian manifold $(\overline{M}, \overline{g}, F)$ with the slant angle $\vartheta$ and $F \neq -I$ ($I$ is the identity on $\Gamma(TM)$) and $J=\frac{2\sigma _{p, q}-p}{2}F+\frac{p}{2}I$, then $M$ is a slant submanifold in the metallic Riemannian manifold $(\overline{M}, \overline{g}, J)$ with slant angle $\theta$ given by:
\begin{equation}\label{e57}
 \sin\theta=\frac{2\sigma_{p,q} - p}{2\sigma_{p,q}}\sin\vartheta.
\end{equation}
\end{theorem}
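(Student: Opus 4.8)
The plan is to argue pointwise at an arbitrary $x\in M$ for a fixed nonzero $X\in T_xM$ and to reduce the claim to a single scalar identity about $\|JX\|$. First I would rewrite both slant angles as ratios of norms. Since $JX=TX+NX$ is an orthogonal decomposition into tangential and normal parts, Definition \ref{d3} (equation (\ref{e45})) gives $\cos\theta=\|TX\|/\|JX\|$ and hence $\sin\theta=\|NX\|/\|JX\|$; likewise, since $FX=fX+\omega X$ is orthogonal, Definition \ref{d33} (equation (\ref{e55})) gives $\cos\vartheta=\|fX\|/\|FX\|$ and $\sin\vartheta=\|\omega X\|/\|FX\|$.

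Next I would exploit the explicit link between the two induced structures. Since $J=\frac{2\sigma_{p,q}-p}{2}F+\frac{p}{2}I$ is the ``$+$'' branch of relation (\ref{e20})(ii), we have $NX=\frac{2\sigma_{p,q}-p}{2}\,\omega X$, so $\|NX\|=\frac{2\sigma_{p,q}-p}{2}\|\omega X\|$. Assuming the proper-slant case $\sin\vartheta\neq0$ guaranteed by $F\neq -I$ (the invariant case being trivial), combining this with the two ratios above gives
\[
\frac{\sin\theta}{\sin\vartheta}=\frac{\|NX\|\,\|FX\|}{\|\omega X\|\,\|JX\|}=\frac{2\sigma_{p,q}-p}{2}\cdot\frac{\|FX\|}{\|JX\|}.
\]
Because $F$ is a Riemannian almost product structure, $F^{2}=I$ together with compatibility gives $\|FX\|=\|X\|$. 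Thus the whole statement collapses to the single constant-ratio identity $\|JX\|=\sigma_{p,q}\|X\|$: granting it, and recalling $\sigma_{p,q}=\frac{2\sigma_{p,q}-p}{2}+\frac{p}{2}$, the displayed quotient becomes $\frac{2\sigma_{p,q}-p}{2\sigma_{p,q}}$, which is exactly (\ref{e57}) (both sines being nonnegative on the relevant range), and the constancy of $\|JX\|/\|X\|$ simultaneously forces $\theta$ to be constant.

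To establish this key identity I would expand $\|JX\|^{2}=\overline{g}(JX,JX)$. Using the metallic relation (\ref{e3}) one gets $\|JX\|^{2}=p\,\overline{g}(JX,X)+q\|X\|^{2}$, so everything hinges on the term $\overline{g}(JX,X)=\overline{g}(TX,X)=\frac{p}{2}\|X\|^{2}+\frac{2\sigma_{p,q}-p}{2}\,\overline{g}(fX,X)$. The step I expect to be the main obstacle is precisely the diagonal quantity $\overline{g}(fX,X)$: the slant hypothesis controls $\|fX\|^{2}=\cos^{2}\vartheta\,\|X\|^{2}$ (equivalently $f^{2}=\cos^{2}\vartheta\,I$ by (\ref{e56}), so that $f$ has eigenvalues $\pm\cos\vartheta$), but it does not by itself pin down $\overline{g}(fX,X)$, which a priori varies as $X$ ranges over the two eigenspaces of $f$. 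Correspondingly, $J$ has eigenvalues $\sigma_{p,q}$ and $p-\sigma_{p,q}$ along the $\pm1$ eigenspaces of $F$, so $\|JX\|/\|X\|$ is not obviously constant. I would therefore concentrate all the remaining work on showing $\overline{g}(JX,X)=\sigma_{p,q}\|X\|^{2}$, scrutinizing carefully how the slant relation (\ref{e56}) must be brought to bear to eliminate the cross term $\overline{g}(fX,X)$, since this is where the essential content of the theorem lies.
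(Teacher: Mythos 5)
Your reductions are individually correct as far as they go: $\sin\theta=\|NX\|/\|JX\|$, $\sin\vartheta=\|\omega X\|/\|FX\|$, $NX=\frac{2\sigma_{p,q}-p}{2}\omega X$ and $\|FX\|=\|X\|$ all hold, and they do collapse (\ref{e57}) to the single identity $\|JX\|=\sigma_{p,q}\|X\|$. The genuine gap is that this identity is not just ``the main obstacle'' you defer to later work --- it cannot be established under the stated hypotheses. Since $\|JX\|^{2}=p\,\overline{g}(JX,X)+q\|X\|^{2}$ by (\ref{e3}) and $\sigma_{p,q}^{2}=p\sigma_{p,q}+q$, your target $\overline{g}(JX,X)=\sigma_{p,q}\|X\|^{2}$ unwinds, via $J=\frac{2\sigma_{p,q}-p}{2}F+\frac{p}{2}I$, to $\overline{g}(FX,X)=\|X\|^{2}$, which by Cauchy--Schwarz together with $\|FX\|=\|X\|$ forces $FX=X$; that happens only in the invariant case $\vartheta=0$ and is incompatible with a proper slant angle. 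Concretely, $\|JX\|^{2}=\frac{p^{2}+2q}{2}\|X\|^{2}+\frac{p(2\sigma_{p,q}-p)}{2}\overline{g}(fX,X)$, and since the slant hypothesis only gives $f^{2}=\cos^{2}\vartheta\,I$ (not the sign of $f$ on each eigenspace), the quantity $\overline{g}(fX,X)/\|X\|^{2}$ genuinely varies with $X$ whenever $p\neq0$ and both eigenvalues $\pm\cos\vartheta$ occur. So the single-vector, diagonal computation you set up cannot close; you have correctly isolated the difficulty but the route you chose to resolve it is a dead end.

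The paper avoids the diagonal term entirely by polarizing. It equates the two expressions for $\overline{g}(NX,NY)$ --- one from $NX=\frac{2\sigma_{p,q}-p}{2}\omega X$ combined with (\ref{e56})(ii), the other from (\ref{e47}) --- to get $\frac{(2\sigma_{p,q}-p)^{2}}{4}\overline{g}(X,Y)\sin^{2}\vartheta=[p\,\overline{g}(X,JY)+q\,\overline{g}(X,Y)]\sin^{2}\theta$, then expands $J$ in terms of $F$, substitutes $Y\mapsto FY$ using $F^{2}=I$, and \emph{adds} the two resulting identities. The uncontrollable cross term $\overline{g}(X,FY)$ then combines with $\overline{g}(X,Y)$ into the common factor $\overline{g}(X,FY+Y)$, giving $\overline{g}(X,FY+Y)[(2\sigma_{p,q}-p)^{2}\sin^{2}\vartheta-4\sigma_{p,q}^{2}\sin^{2}\theta]=0$; since $F\neq-I$ the first factor is not identically zero, so the bracket vanishes, which is (\ref{e57}). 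If you want to complete your argument you must likewise work with two vectors and eliminate $\overline{g}(X,FY)$ by such a symmetrization (or use the polarized identities (\ref{e47}) and (\ref{e56}) directly); the norm of $JX$ at a single vector does not carry enough information, and your own computation is in effect a demonstration of that fact.
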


\begin{proof}
From (\ref{e20}(ii)), we obtain:
$\overline{g}(NX,NY)= \frac{(2\sigma _{p, q}-p)^{2}}{4}\overline{g}(\omega X,\omega Y)$, for any $X, Y\in \Gamma(TM)$.
From (\ref{e47}) and (\ref{e56}(ii)), we get:
\begin{equation}\label{e58}
\frac{(2\sigma _{p, q}-p)^{2}}{4}\overline{g}(X,Y)\sin^{2}\vartheta = [p\overline{g}(X,JY) + q \overline{g}(X,Y)]\sin^{2}\theta,
\end{equation}
for any $X, Y\in \Gamma(TM)$.
Using $J=\frac{p}{2}I +\frac{2\sigma _{p, q}-p}{2}F$, we have:
\begin{equation}\label{e59}
(2\sigma_{p, q}-p)^{2}\overline{g}(X,Y)\sin^{2}\vartheta = [(2p^{2}+4q)\overline{g}(X,Y) + 2p\sqrt{p^{2}+4q}\overline{g}(X,FY)]\sin^{2}\theta,
\end{equation}
for any $X, Y\in \Gamma(TM)$. Replacing $Y$ by $FY$ and using $F^{2}Y=Y$, for any $Y \in \Gamma(TM)$, we obtain:
\begin{equation}\label{e60}
(2\sigma_{p, q}-p)^{2}\overline{g}(X,FY)\sin^{2}\vartheta = [(2p^{2}+4q)\overline{g}(X,FY) + 2p\sqrt{p^{2}+4q}\overline{g}(X,Y)]\sin^{2}\theta.
\end{equation}
for any $X, Y\in \Gamma(TM)$.

Summing the equalities (\ref{e59}) and (\ref{e60}), we obtain:
\begin{equation}\label{e61}
\overline{g}(X,FY+Y)[(2\sigma_{p,q} - p)^{2}\sin^{2}\vartheta -4(q+p\sigma_{p,q})\sin^{2}\theta]=0,
\end{equation}
for any $X, Y\in \Gamma(TM)$.

Using $q+p\sigma_{p,q}=\sigma_{p,q}^{2}$, $FY \neq -Y$ and $\theta, \vartheta  \in [0, \pi )$ in (\ref{e61}), we get  (\ref{e57}).

\end{proof}

In particular, for $p=q=1$, we obtain the relation between slant angle $\theta$ of the immersed submanifold $M$ in a Golden Riemannian manifold $(\overline{M}, \overline{g},J$) and the slant angle $\vartheta$ of $M$ immersed in the almost product Riemannian manifold $(\overline{M}, \overline{g},F)$:

\begin{prop}
Let $M$ be a submanifold in the Riemannian manifold $(\overline{M}, \overline{g}$) endowed with an almost product structure $F$ on $\overline{M}$ and let $J$ be the induced Golden structure by $F$ on $(\overline{M}, \overline{g}$). If $M$ is a slant submanifold in the almost product Riemannian manifold $(\overline{M}, \overline{g}, F)$ with the slant angle $\vartheta$ and $F \neq -I$ ($I$ is the identity on $\Gamma(TM)$) and $J=\frac{2\phi-1}{2}F+\frac{1}{2}I$, then  $M$ is a slant submanifold in the Golden Riemannian manifold $(\overline{M}, \overline{g}, J)$ with slant angle $\theta$ given by:
\begin{equation}\label{e62}
\sin\theta=\frac{2\phi -1}{2\phi}\sin\vartheta,
\end{equation}
where $\phi=\frac{1+\sqrt{5}}{2}$ is the Golden number.
\end{prop}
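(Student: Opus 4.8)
The plan is to observe that the Golden case is exactly the specialization $p=q=1$ of the Theorem proved just above, so the cleanest route is to substitute these values into the already established relation (\ref{e57}). Recall that the metallic number is $\sigma_{p,q} = \frac{p + \sqrt{p^2+4q}}{2}$; setting $p=q=1$ gives $\sigma_{1,1} = \frac{1+\sqrt{5}}{2} = \phi$, the Golden number. Under this specialization the induced metallic structure $J = \frac{2\sigma_{p,q}-p}{2}F + \frac{p}{2}I$ becomes precisely the induced Golden structure $J = \frac{2\phi-1}{2}F + \frac{1}{2}I$ appearing in the hypothesis, and the Golden slant relations (\ref{e51})--(\ref{e52}) are just (\ref{e46})--(\ref{e47}) with $p=q=1$. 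Hence all the hypotheses of the Theorem hold verbatim, and with $\sigma_{p,q} = \phi$, $p=1$ the conclusion (\ref{e57}) reads $\sin\theta = \frac{2\phi-1}{2\phi}\sin\vartheta$, which is exactly (\ref{e62}).

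Alternatively, one can reprove the statement directly, mirroring the Theorem's argument but with the Golden-specific identities. First I would combine $\overline{g}(NX,NY) = \frac{(2\phi-1)^2}{4}\overline{g}(\omega X,\omega Y)$, coming from (\ref{e22}(ii)), with the slant norm relations (\ref{e52}) and (\ref{e56}(ii)) to obtain $\frac{(2\phi-1)^2}{4}\sin^2\vartheta\,\overline{g}(X,Y) = \sin^2\theta[\overline{g}(X,TY) + \overline{g}(X,Y)]$. Next I would substitute $J = \frac{2\phi-1}{2}F + \frac{1}{2}I$, using $\overline{g}(X,TY) = \overline{g}(X,JY)$ since $X$ is tangent, then replace $Y$ by $FY$ and invoke $F^2 = I$ to produce a companion relation, and add the two.

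The main obstacle, and essentially the only nontrivial point, is the final algebraic simplification: after summing, the common factor $\overline{g}(X, Y+FY)$ multiplies $5\sin^2\vartheta - (6 + 2\sqrt{5})\sin^2\theta$, and one must recognize $6 + 2\sqrt{5} = 4\phi^2$ (equivalently the defining Golden identity $\phi^2 = \phi + 1$, i.e. $q + p\sigma_{p,q} = \sigma_{p,q}^2$ specialized to $p=q=1$) together with $(2\phi-1)^2 = 5$. Since $F \neq -I$, the factor $\overline{g}(X, Y+FY)$ is nonzero for a suitable choice of $X$, so the bracket must vanish. Taking square roots with $\theta,\vartheta \in [0,\pi)$ then yields $\sin\theta = \frac{\sqrt{5}}{2\phi}\sin\vartheta = \frac{2\phi-1}{2\phi}\sin\vartheta$, as claimed.
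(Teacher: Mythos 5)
Your primary route—specializing $p=q=1$ (so $\sigma_{p,q}=\phi$) in the metallic relation (\ref{e57})—is exactly how the paper obtains this proposition, which it introduces with ``In particular, for $p=q=1$'' and leaves without a separate proof. Your numerical checks ($(2\phi-1)^2=5$, $4(q+p\sigma_{p,q})=4\phi^2=6+2\sqrt{5}$, $2\phi-1=\sqrt{5}$) in the alternative direct argument are also correct, so both versions are sound.
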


\section{Semi-slant submanifolds in metallic or Golden Riemannian manifolds}

\normalfont

We define the slant distribution of a metallic (or Golden) Riemannian manifold, using a similar definition as for Riemannian product manifold (\cite{Li&Liu},\cite{Sahin}).

\begin{definition}\label{d4}
Let $M$ be an immersed submanifold of a metallic (or Golden) Riemannian manifold $(\overline{M},\overline{g},J)$. A differentiable distribution $D$ on $M$ is called a {\it slant distribution} if the angle $\theta_{D}$ between $JX_{x}$ and the vector subspace $D_{x}$ is constant, for any $x \in M$  and any nonzero vector field $X_{x} \in \Gamma(D_{x})$. The constant angle $\theta_{D}$ is called the {\it slant angle} of the distribution $D$.
\end{definition}

\begin{prop}
Let $D$ be a differentiable distribution on a submanifold $M$ of a metallic (or Golden) Riemannian manifold $(\overline{M},\overline{g},J)$. The distribution $D$ is a slant distribution if and only if there exists a constant $\lambda \in [0, 1]$ such that:
\begin{equation}\label{e100}
(P_{D}T)^{2}X= \lambda(pP_{D}TX+qX),
\end{equation}
 for any $ X \in \Gamma(D)$, where $P_{D}$ is the orthogonal projection on $D$. Moreover, if $\theta_{D}$ is the slant angle of $D$, then it satisfies $\lambda = \cos^{2} \theta_{D}$.
 \end{prop}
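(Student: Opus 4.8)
The plan is to convert the geometric slant condition of Definition \ref{d4} into an algebraic operator identity on $D$ by working with the single operator $S := P_D\,T$ restricted to $\Gamma(D)$, and to exploit that $J$ is invertible (its minimal polynomial $x^2-px-q$ has nonzero constant term $-q$, so $JX\neq 0$ whenever $X\neq 0$, and the angle is everywhere defined). First I would unwind the angle: for $X\in\Gamma(D)$ the decomposition $JX=TX+NX$ from \eqref{e13}(i) has $NX\in\Gamma(T^{\perp}M)$ orthogonal to $D$, so the orthogonal projection of $JX$ onto $D$ equals $P_D(TX)=SX$. Hence
\[
\cos\theta_D=\frac{\|P_D(JX)\|}{\|JX\|}=\frac{\|SX\|}{\|JX\|},
\qquad
\cos^2\theta_D=\frac{\|SX\|^2}{\|JX\|^2}.
\]

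Next I would rewrite both norms as quadratic forms in $S$. From \eqref{e3} with $Y=X$, and using $\overline{g}(JX,X)=\overline{g}(TX,X)=\overline{g}(SX,X)$ for $X\in\Gamma(D)$ (the normal part $NX$ drops out and $P_DX=X$), I get
\[
\|JX\|^2=\overline{g}(JX,JX)=p\,\overline{g}(SX,X)+q\,\overline{g}(X,X).
\]
For the numerator, the symmetry of $T$ in \eqref{e15}(i) together with the idempotency and self-adjointness of $P_D$ gives $\|SX\|^2=\overline{g}(P_DTX,P_DTX)=\overline{g}((P_DT)^2X,X)=\overline{g}(S^2X,X)$; the same computation shows that $S$ is $\overline{g}$-symmetric on $\Gamma(D)$, and hence so is the operator $A_\lambda$ defined by $A_\lambda X:=S^2X-\lambda(pSX+qX)$, for every scalar $\lambda$. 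Therefore the condition ``$\theta_D$ is constant'' is equivalent to the existence of a constant $\lambda$ with
\[
\overline{g}(S^2X,X)=\lambda\big(p\,\overline{g}(SX,X)+q\,\overline{g}(X,X)\big)
\quad\text{for all } X\in\Gamma(D),
\]
that is, $\overline{g}(A_\lambda X,X)=0$ on $\Gamma(D)$, with $\lambda=\cos^2\theta_D$; since the angle between a vector and a subspace lies in $[0,\tfrac{\pi}{2}]$, this $\lambda$ automatically lies in $[0,1]$.

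Finally I would prove the two implications. The implication ``\eqref{e100} $\Rightarrow$ slant'' is immediate: pairing \eqref{e100} with $X$ and invoking the two norm formulas yields $\|SX\|^2=\lambda\|JX\|^2$, so $\cos^2\theta_D=\lambda$ is constant. For the converse I must upgrade the scalar identity $\overline{g}(A_\lambda X,X)=0$ to the operator identity $A_\lambda=0$, which is \eqref{e100}. This is where the $\overline{g}$-symmetry of $A_\lambda$ is indispensable: polarizing,
\[
0=\overline{g}\big(A_\lambda(X+Y),X+Y\big)=2\,\overline{g}(A_\lambda X,Y)
\quad\text{for all } X,Y\in\Gamma(D),
\]
the diagonal terms vanishing and the two cross terms coinciding by symmetry; taking $Y=A_\lambda X\in\Gamma(D)$ forces $A_\lambda X=0$. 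This yields exactly \eqref{e100}, and the value $\lambda=\cos^2\theta_D$ identified above gives the ``moreover'' statement.

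I expect the main obstacle to be precisely this last passage from the pointwise quadratic-form condition to the operator identity \eqref{e100}: it genuinely requires both the $\overline{g}$-symmetry of $P_DT$ on $D$ and the polarization step, and cannot be read off directly from Definition \ref{d4}. A secondary point demanding care is the reduction $P_D(JX)=P_DTX$, ensuring that the normal component $NX$ contributes nothing to the projection onto $D$, so that the whole analysis takes place inside the self-adjoint operator $S$ on $D$.
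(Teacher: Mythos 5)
Your proof is correct and follows essentially the same route as the paper: both unwind Definition \ref{d4} into the quadratic-form identity $\|P_DTX\|^2=\cos^2\theta_D\,\overline{g}(pP_DTX+qX,X)$ using the $\overline{g}$-symmetry of $T$ and of the projection $P_D$. The only real difference is that you make explicit the polarization argument (via the symmetry of $S=P_DT$ on $D$) needed to upgrade this scalar identity to the operator identity \eqref{e100}, a step the paper's proof passes over without comment.
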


\begin{proof}
If the distribution $D$ is a slant distribution on $M$, by using $$\cos\theta_{D} =\frac{\overline{g}(JX,P_{D}TX)}{\| JX \| \cdot \| P_{D}TX \|}=\frac{\| P_{D}TX \|}{\| JX \|},$$ we get
$\overline{g}(P_{D}TX,P_{D}TX)=\cos^2 \theta_{D} \overline{g}(JX, JX)= \cos^2 \theta_{D} \overline{g}(pP_{D}TX+qX,X),$ for any $X\in \Gamma(D)$ and we obtain (\ref{e100}).

Conversely, if there exists a constant $\lambda \in [0, 1]$ such that (\ref{e100}) holds for any $X \in \Gamma(D)$, we obtain
$\overline{g}(JX,P_{D}TX)=\overline{g}(X,JP_{D}TX)=\overline{g}(X,(P_{D}T)^{2}X)=\lambda\overline{g}(X,pP_{D}TX+qX)=\lambda\overline{g}(X,pJTX+qX)=\lambda\overline{g}(X,J^{2}X)$. Thus,
$$\cos\theta_{D} =\frac{\overline{g}(JX,P_{D}TX)}{\| JX \| \cdot \| P_{D}TX \|}=\lambda\frac{\overline{g}(JX,JX)}{\| JX \| \cdot \| P_{D}TX\|}=\lambda\frac{\| JX \|}{\|P_{D}TX\|},$$
and using $\cos\theta =\frac{\| P_{D}TX \|}{\| JX \|}$ we get  $\cos^{2}\theta_{D} = \lambda$. Thus, $\cos^{2}\theta_{D}$ is constant and $D$ is a slant distribution on $M$.
\end{proof}

\begin{definition}\label{d5}
Let $M$ be an immersed submanifold in a metallic (or Golden) Riemannian manifold $(\overline{M},\overline{g},J)$. We say that $M$ is a {\it bi-slant submanifold} of $\overline{M}$ if there exist two orthogonal differentiable distribution $D_{1}$ and $D_{2}$ on $M$ such that $TM = D_{1}\oplus D_{2}$, and $D_{1}$, $D_{2}$ are slant distribution with the slant angles $\theta_{1}$ and $\theta_{2}$, respectively. Moreover, $M$ is a {\it proper bi-slant submanifold} of $\overline{M}$ if $dim(D_{1})\cdot dim(D_{2}) \neq 0 $.
\end{definition}

For a differentiable distribution $D_{1}$ on $M$,  we denote by $D_{2}:= D_{1}^{\bot}$  the orthogonal distribution of $D_{1}$ in $M$ (i.e. $TM = D_{1} \oplus D_{2} $). Let $P_{1}$ and $P_{2}$ be the orthogonal projections on $D_{1}$ and $D_{2}$. Thus, for any $X \in \Gamma(TM)$, we can consider the decomposition of $X=P_{1}X + P_{2}X$, where $P_{1}X \in \Gamma(D_{1})$ and $P_{2}X \in \Gamma(D_{2})$.

 If $M$ is a bi-slant submanifold of a metallic Riemannian manifold $(\overline{M},\overline{g},J)$ with the orthogonal distribution $D_{1}$ and $D_{2}$ and the slant angles $\theta_{1}$ and $\theta_{2}$ respectively, then
$
JX=P_{1}TX+P_{2}TX+NX = TP_{1}X+ TP_{2} X + N P_{1}X + NP_{2}X,
$
for any $X \in \Gamma(TM)$.

 In a similar manner as in (\cite{Li&Liu}), we can prove:

\begin{prop}
If $M$ is a bi-slant submanifold in a metallic (or Golden) Riemannian manifold $(\overline{M},\overline{g},J)$, with the slant angles $\theta_{1}=\theta_{2}=\theta$ and $g(JX,Y)=0$, for any $X \in \Gamma(D_{1})$ and $Y \in \Gamma(D_{2})$, then $M$ is a slant submanifold in the metallic Riemannian manifold $(\overline{M},\overline{g},J)$ with the slant angle $\theta$.
\end{prop}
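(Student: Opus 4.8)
The plan is to reduce the statement to the algebraic slant criterion
\begin{equation*}
T^{2}X=\cos^{2}\theta\,(pTX+qX)\qquad\text{for all }X\in\Gamma(TM),
\end{equation*}
which, by the characterization of slant distributions given in the Proposition preceding Definition \ref{d5} applied with $D=TM$ and $P_{D}=I$ (and with $\lambda=\cos^{2}\theta\in[0,1]$), is equivalent to $M$ being a slant submanifold of slant angle $\theta$. Compare also (\ref{e48}) for the forward implication.

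The first step is to translate the hypothesis $g(JX,Y)=0$ into an invariance property of $D_{1}$ and $D_{2}$ under $T$. Since $NX\in\Gamma(T^{\perp}M)$ while $Y\in\Gamma(TM)$, the decomposition $JX=TX+NX$ gives $\overline{g}(JX,Y)=\overline{g}(TX,Y)$. Thus the hypothesis reads $\overline{g}(TX,Y)=0$ for every $X\in\Gamma(D_{1})$ and $Y\in\Gamma(D_{2})$, which forces $TX\in\Gamma(D_{1})$, i.e. $T(D_{1})\subseteq D_{1}$. Using the $\overline{g}$-symmetry of $T$ from (\ref{e15})(i), $\overline{g}(TX,Y)=\overline{g}(X,TY)$, the same relation yields $TY\in\Gamma(D_{2})$ for every $Y\in\Gamma(D_{2})$, so $T(D_{2})\subseteq D_{2}$. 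Hence $T$ leaves both distributions invariant, and consequently $P_{1}T=TP_{1}$ and $P_{2}T=TP_{2}$, with the cross terms $P_{1}TP_{2}$ and $P_{2}TP_{1}$ vanishing.

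The second step exploits this invariance together with the slant criterion for each distribution. For $X\in\Gamma(D_{1})$, invariance gives $P_{1}TX=TX$ and $T^{2}X=T(TX)\in\Gamma(D_{1})$, so $P_{1}T^{2}X=T^{2}X$; therefore $(P_{1}T)^{2}X=P_{1}T(P_{1}TX)=P_{1}T^{2}X=T^{2}X$. Applying the slant-distribution Proposition to $D_{1}$ with $\cos^{2}\theta_{1}=\cos^{2}\theta$ gives $(P_{1}T)^{2}X=\cos^{2}\theta\,(pP_{1}TX+qX)=\cos^{2}\theta\,(pTX+qX)$, and hence $T^{2}X=\cos^{2}\theta\,(pTX+qX)$ for $X\in\Gamma(D_{1})$. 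The identical argument on $D_{2}$ produces the same relation for $X\in\Gamma(D_{2})$.

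The final step passes from the two distributions to the whole tangent bundle. Writing $X=P_{1}X+P_{2}X$ for arbitrary $X\in\Gamma(TM)$ and using linearity of $T$ and $T^{2}$ together with the two relations just obtained, I would compute
\begin{equation*}
T^{2}X=T^{2}(P_{1}X)+T^{2}(P_{2}X)=\cos^{2}\theta\,\bigl(pT(P_{1}X+P_{2}X)+q(P_{1}X+P_{2}X)\bigr)=\cos^{2}\theta\,(pTX+qX),
\end{equation*}
so the slant criterion holds on all of $\Gamma(TM)$ and $M$ is slant with angle $\theta$. The only delicate point, and the place where the hypotheses are genuinely used, is the first step: the whole argument hinges on recognizing that $g(JX,Y)=0$ is precisely the $T$-invariance of $D_{1}$ and $D_{2}$, which is what makes $(P_{i}T)^{2}$ collapse to $T^{2}$ on each $D_{i}$. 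Without this invariance the surviving cross terms would obstruct the clean additive decomposition in the last display.
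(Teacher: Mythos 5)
Your proof is correct, and it shares the paper's crucial first step: reading $g(JX,Y)=0$ as $\overline{g}(TX,Y)=0$ and deducing, via the $\overline{g}$-symmetry of $T$ and the orthogonality of $TM=D_{1}\oplus D_{2}$, that $T$ preserves both distributions. Where you diverge is in how the two slant conditions are combined. The paper stays with the metric definition of the slant angle: it writes $\overline{g}(TP_{i}X,TP_{i}X)=\cos^{2}\theta_{i}\,\overline{g}(JP_{i}X,JP_{i}X)$ for $i=1,2$ and then uses the Pythagorean splittings $\|TX\|^{2}=\|TP_{1}X\|^{2}+\|TP_{2}X\|^{2}$ and $\|JX\|^{2}=\|JP_{1}X\|^{2}+\|JP_{2}X\|^{2}$ together with the mediant property of equal ratios to conclude $\|TX\|^{2}/\|JX\|^{2}=\cos^{2}\theta$ for every $X$. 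That argument is elementary and self-contained, but it silently relies on $\overline{g}(JP_{1}X,JP_{2}X)=0$ (which does follow from the hypothesis via $\overline{g}(JP_{1}X,JP_{2}X)=p\,\overline{g}(JP_{1}X,P_{2}X)+q\,\overline{g}(P_{1}X,P_{2}X)=0$, though the paper does not say so). Your route instead converts everything into the tensorial identity $T^{2}=\cos^{2}\theta\,(pT+qI)$ on each $D_{i}$ — using the invariance to collapse $(P_{i}T)^{2}$ to $T^{2}$ — and then extends by linearity, invoking the slant-distribution characterization with $D=TM$ at the end. This buys you a cleaner passage to the whole tangent bundle (no orthogonality of the $J$-images needed), at the cost of leaning on both directions of the characterization proposition rather than on the definition alone. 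Both arguments are sound; yours is a legitimate alternative, not a rederivation of the paper's.
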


\begin{proof}
From $\overline{g}(JX,Y)=\overline{g}(TX,Y)=0$, for any $X \in \Gamma(D_{1})$ and $Y \in \Gamma(D_{2})$, follows $\overline{g}(X,JY)=\overline{g}(X,TY)=0$. Thus, we obtain $TX \in \Gamma(D_{1})$, for any $X \in \Gamma(D_{1})$ and $TY \in \Gamma(D_{2})$, for any $Y \in \Gamma(D_{2})$.

Moreover, using the projections of any $X \in \Gamma(TM)$ on $\Gamma(D_{1})$ and $\Gamma(D_{2})$ respectively, we obtain the decomposition $X=P_{1}X + P_{2}X$, where $P_{1}X \in \Gamma(D_{1})$ and $P_{2}X \in \Gamma(D_{2})$.

From $\overline{g}(TP_{i}X,TP_{i}X)=\cos^{2}\theta_{i}g(JP_{i}X,JP_{i}X)$ (for $i \in \{ 1,2 \}$) and using $\theta_{1}=\theta_{2}=\theta$, we obtain:
$$ \frac{\overline{g}(TX,TX)}{\overline{g}(JX,JX)}=\frac{\overline{g}(TP_{1}X,TP_{1}X)+\overline{g}(TP_{2}X,TP_{2}X)}{\overline{g}(JP_{1}X,JP_{1}X)+\overline{g}(JP_{2}X,JP_{2}X)}=\cos^{2}\theta,$$
for any $X \in \Gamma(TM)$. Thus, $M$ is a slant submanifold in the metallic (or Golden) Riemannian manifold $(\overline{M},\overline{g},J)$ with the slant angle $\theta$.
\end{proof}

If $M$ is a {\it bi-slant submanifold} of a manifold $\overline{M}$, for particular values of the angles $\theta_{1}=0$ and $\theta_{2}\neq 0$, we obtain:

\begin{definition}\label{d6}
An immersed submanifold $M$ in a metallic (or Golden) Riemannian manifold $(\overline{M},\overline{g},J)$ is a semi-slant submanifold if there exist two orthogonal distributions
$D_{1}$ and $D_{2}$ on $M$ such that:

(1) $TM$ admits the orthogonal direct decomposition $TM=D_{1}\oplus D_{2}$;

(2) The distribution $D_{1}$ is invariant distribution (i.e. $J(D_{1})=D_{1}$);

(3) The distribution $D_{2}$ is slant with angle $\theta \neq 0$.

Moreover, if $dim(D_{1})\cdot dim(D_{2}) \neq 0 $, then $M$ is a proper semi-slant submanifold.

\end{definition}

\begin{remark}
If $M$ is a semi-slant submanifold of a metallic Riemannian manifold $(\overline{M},\overline{g},J)$ with the slant angle $\theta$ of the distributions $D_{2}$, then we get:

-  $M$ is an invariant submanifold if $dim(D_{2})=0$;

- $M$ is an anti-invariant submanifold if $dim(D_{1})=0$ and $\theta=\frac{\pi}{2}$;

- $M$ is a semi-invariant submanifold if $D_{2}$ is anti-invariant (i.e.  $\theta = \frac{\pi}{2}$).

\end{remark}

If $M$ is a semi-slant submanifold in a metallic (or Golden) Riemannian manifold $(\overline{M},\overline{g},J)$, then:
\begin{equation}\label{e63}
JX= TP_{1}X+ TP_{2} X + NP_{2}X=P_{1}TX+ P_{2} TX + NP_{2}X
\end{equation}
and
\begin{equation}\label{e64}
(i) JP_{1}X= TP_{1}X,  \: (ii) NP_{1}X=0,   \: (iii) TP_{2} X \in \Gamma(D_{2}),
\end{equation}
for any $X \in \Gamma(TM)$. Moreover, we have
$$\overline{g}(JP_{2}X,TP_{2}X)=\cos \theta(X) \| TP_{2}X\| \cdot \|JP_{2}X\|$$
and the cosine of the slant angle $\theta(X)$ of the distribution $D_{2}$ is constant, for any nonzero $X \in \Gamma(TM)$. If $\theta(X)=:\theta$, we get:
\begin{equation}\label{e65}
\cos \theta =\frac{\overline{g}(JP_{2}X, TP_{2}X)}{\|TP_{2}X\| \cdot \|JP_{2}X\|}=\frac{\|TP_{2}X \|}{\|JP_{2}X\|},
\end{equation}
for any nonzero $X \in \Gamma(TM)$.

\begin{prop}\label{p11}
If $M$ is a semi-slant submanifold of the metallic Riemannian manifold $(\overline{M},\overline{g},J)$ with the slant angle $\theta$ of the distribution $D_{2}$, then:
\begin{equation}\label{e66}
\overline{g}(TP_{2}X,TP_{2}Y)=\cos^2 \theta[p \overline{g}(TP_{2}X,P_{2}Y)+q \overline{g}(P_{2}X,P_{2}Y)]
\end{equation}
and
\begin{equation}\label{e67}
\overline{g}(NX,NY)=\sin^2 \theta[p \overline{g}(TP_{2}X,P_{2}Y)+q \overline{g}(P_{2}X,P_{2}Y)],
\end{equation}
for any $X$, $Y\in \Gamma(TM)$.
\end{prop}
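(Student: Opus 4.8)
The plan is to reduce everything to the slant behaviour of the distribution $D_2$ and then polarize, exactly as in the proof of Proposition \ref{p1}, but now carrying the projection $P_2$ through the computation. The starting point is the scalar slant identity that follows from (\ref{e65}): for any $Z \in \Gamma(D_2)$ we have $\overline{g}(TZ, TZ) = \cos^2\theta\, \overline{g}(JZ, JZ)$. First I would rewrite the right-hand side using (\ref{e3}), namely $\overline{g}(JZ, JZ) = p\,\overline{g}(JZ, Z) + q\,\overline{g}(Z, Z)$, and then replace $\overline{g}(JZ, Z)$ by $\overline{g}(TZ, Z)$: this replacement is legitimate precisely because the normal part $NZ$ pairs to zero against the tangent vector $Z$, so that $\overline{g}(JZ,Z)=\overline{g}(TZ+NZ,Z)=\overline{g}(TZ,Z)$. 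This yields the quadratic identity $\overline{g}(TZ, TZ) = \cos^2\theta[p\,\overline{g}(TZ, Z) + q\,\overline{g}(Z, Z)]$, valid for every $Z \in \Gamma(D_2)$.

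To obtain the bilinear identity (\ref{e66}), I would apply the polarization trick: substitute $Z = P_2 X + P_2 Y$ (which again lies in $\Gamma(D_2)$), expand both sides, and subtract the two identities obtained from $Z = P_2 X$ and $Z = P_2 Y$ individually. The cross terms on the left collapse to $2\,\overline{g}(TP_2 X, TP_2 Y)$, and on the right the mixed term $p[\overline{g}(TP_2 X, P_2 Y) + \overline{g}(TP_2 Y, P_2 X)]$ simplifies to $2p\,\overline{g}(TP_2 X, P_2 Y)$ by the $\overline{g}$-symmetry of $T$ recorded in (\ref{e15})(i). Dividing by $2$ gives exactly (\ref{e66}).

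For (\ref{e67}), the key reduction is that $NX = NP_2 X$ for every $X$, since $NP_1 X = 0$ by (\ref{e64})(ii); hence $\overline{g}(NX, NY) = \overline{g}(NP_2 X, NP_2 Y)$. Writing $JP_2 X = TP_2 X + NP_2 X$ with $TP_2 X$ tangent and $NP_2 X$ normal, the cross terms vanish in $\overline{g}(JP_2 X, JP_2 Y)$, so $\overline{g}(NP_2 X, NP_2 Y) = \overline{g}(JP_2 X, JP_2 Y) - \overline{g}(TP_2 X, TP_2 Y)$. Evaluating the first term by (\ref{e3}) together with the same identity $\overline{g}(JP_2 X, P_2 Y) = \overline{g}(TP_2 X, P_2 Y)$ as above gives $\overline{g}(JP_2 X, JP_2 Y) = p\,\overline{g}(TP_2 X, P_2 Y) + q\,\overline{g}(P_2 X, P_2 Y)$, while the second term is precisely (\ref{e66}). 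Subtracting and using $1 - \cos^2\theta = \sin^2\theta$ produces (\ref{e67}).

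I do not expect a serious obstacle here, since the argument is a direct transcription of the slant case treated in Proposition \ref{p1}; the only point requiring care is the tangent/normal bookkeeping — in particular that $TP_2 X \in \Gamma(D_2)$ by (\ref{e64})(iii), so that pairings such as $\overline{g}(TP_2 X, P_2 Y)$ stay inside $D_2$ and the orthogonal decomposition $TM = D_1 \oplus D_2$ is respected throughout. Making sure every normal component is discarded against a tangent vector (and conversely) is what keeps the two identities exactly parallel and free of spurious mixed terms.
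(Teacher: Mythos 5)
Your proposal is correct and follows essentially the same route as the paper's own proof: polarization of the scalar slant identity $\overline{g}(TZ,TZ)=\cos^2\theta\,\overline{g}(JZ,JZ)$ on $D_2$ via $Z=P_2X+P_2Y$ together with (\ref{e3}) for (\ref{e66}), and the orthogonal splitting $JP_2X=TP_2X+NX$ (using $NP_1X=0$) for (\ref{e67}). The only cosmetic difference is that you discard the normal component before polarizing and invoke the symmetry of $T$ explicitly, whereas the paper keeps $\overline{g}(JP_2X,P_2Y)$ throughout and converts it to $\overline{g}(TP_2X,P_2Y)$ at the end.
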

\begin{proof}
Taking $X+Y$ in (\ref{e65}) we have:
$$
\overline{g}(TP_{2}X,TP_{2}Y)=\cos^{2}\theta  \overline{g}(JP_{2}X,JP_{2}Y)= \cos^{2}\theta[p\overline{g}(JP_{2}X,P_{2}Y)+q\overline{g}(P_{2}X,P_{2}Y)],$$
for any $X$, $Y\in \Gamma(TM)$ and using (\ref{e64})(iii) we get (\ref{e66}).

From (\ref{e64})(ii) we get $TP_{2}X=JP_{2}X-NX$, for any $X\in \Gamma(TM)$. Thus, we obtain:
$$\overline{g}(TP_{2}X,TP_{2}Y)=\overline{g}(JP_{2}X,JP_{2}Y)-\overline{g}(NX,NY),$$ for any $X$, $Y\in \Gamma(TM)$ and it implies (\ref{e67}).

\end{proof}

\begin{remark}
A semi-slant submanifold $M$ of a Golden Riemannian manifold $(\overline{M},\overline{g},J)$ with the slant angle $\theta$ of the distribution $D_{2}$ verifies:
\begin{equation}
\overline{g}(TP_{2}X,TP_{2}Y)=\cos^2 \theta[\overline{g}(TP_{2}X,P_{2}Y)+\overline{g}(P_{2}X,P_{2}Y)],
\end{equation}
and
\begin{equation}
\overline{g}(NX,NY)=\sin^2 \theta[\overline{g}(TP_{2}X,P_{2}Y)+\overline{g}(P_{2}X,P_{2}Y)],
\end{equation}
for any $X$, $Y\in \Gamma(TM)$.
\end{remark}

\begin{prop}
Let $M$ be a semi-slant submanifold of a metallic Riemannian manifold $(\overline{M}, \overline{g},J)$ with the slant angle $\theta$ of the distribution $D_{2}$. Then:
\begin{equation}\label{e68}
(TP_{2})^2=\cos^2 \theta(p TP_{2}+qI),
\end{equation}
where $I$ is the identity on $\Gamma(D_{2})$ and
\begin{equation}\label{e69}
\nabla ((TP_{2})^2)=p \cos^2 \theta \nabla (TP_{2}).
\end{equation}
\end{prop}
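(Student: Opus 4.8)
The plan is to reduce (\ref{e68}) to a pointwise operator identity on the slant distribution $D_2$ and then obtain (\ref{e69}) by a one-line differentiation. The two facts I would lean on are Proposition~\ref{p11} (specifically (\ref{e66})) and the invariance $T(D_2)\subseteq D_2$, which is exactly (\ref{e64})(iii): for every $X\in\Gamma(TM)$ one has $TP_2X\in\Gamma(D_2)$, so $T$ maps $D_2$ into itself and $(TP_2)|_{D_2}=T|_{D_2}$. Consequently $(TP_2)^2X=T^2P_2X$ for every $X$, and on $D_2$ the operators $(TP_2)^2$, $TP_2$, $I$ coincide with $T^2|_{D_2}$, $T|_{D_2}$, $\mathrm{id}_{D_2}$. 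In this sense (\ref{e68}) is just the slant relation (\ref{e48}) localized to the slant distribution $D_2$.

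First I would take $X,Y\in\Gamma(D_2)$, so that $P_2X=X$ and $P_2Y=Y$, and rewrite (\ref{e66}) as
\[\overline{g}(TX,TY)=\cos^2\theta\,[\,p\,\overline{g}(TX,Y)+q\,\overline{g}(X,Y)\,].\]
Using the $\overline{g}$-symmetry of $T$ from (\ref{e15})(i) on the left-hand side gives $\overline{g}(TX,TY)=\overline{g}(X,T^2Y)$, and applied to the first term on the right gives $\overline{g}(TX,Y)=\overline{g}(X,TY)$. Hence
\[\overline{g}\big(X,\,T^2Y-\cos^2\theta(pTY+qY)\big)=0\qquad\text{for all }X\in\Gamma(D_2).\]
Because $T(D_2)\subseteq D_2$, both $T^2Y$ and $pTY+qY$ lie in $D_2$, so the vector in the second slot lies in $D_2$; since $\overline{g}$ restricts to a nondegenerate form on $D_2$, that vector must vanish. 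This yields $T^2Y=\cos^2\theta(pTY+qY)$ on $D_2$, which is precisely (\ref{e68}) once we read $T^2Y=(TP_2)^2Y$, $TY=TP_2Y$ and $Y=IY$ on $D_2$.

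Finally, for (\ref{e69}) I would apply the covariant derivative to the tensorial identity (\ref{e68}). Since $M$ is slant along $D_2$ the angle $\theta$ is constant, so the scalar $\cos^2\theta$ passes through $\nabla$, and the identity tensor is parallel, $\nabla I=0$. Differentiating $(TP_2)^2=\cos^2\theta(pTP_2+qI)$ therefore gives $\nabla\big((TP_2)^2\big)=p\cos^2\theta\,\nabla(TP_2)$, which is (\ref{e69}); this mirrors exactly the passage from (\ref{e48}) to (\ref{e49}) in the purely slant case.

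The only genuinely delicate point is the passage from the bilinear relation (\ref{e66}) to the operator identity (\ref{e68}): it requires the invariance $T(D_2)\subseteq D_2$ so that the nondegeneracy of $\overline{g}$ restricted to $D_2$ can be invoked, rather than nondegeneracy on all of $TM$ (on $D_1$ the claimed identity fails, since $(TP_2)^2$ annihilates $D_1$ while $qI$ does not, confirming that $I$ must be read as $\mathrm{id}_{D_2}$). Everything else is routine bookkeeping with the symmetry of $T$ and the constancy of $\theta$.
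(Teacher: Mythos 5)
Your proof is correct and follows essentially the same route as the paper: both pass from the bilinear relation (\ref{e66}) to the operator identity (\ref{e68}) via the $\overline{g}$-symmetry of $T$ (the paper writes this as $\overline{g}(TP_{2}X,TP_{2}Y)=\overline{g}((TP_{2})^{2}X,P_{2}Y)$) together with nondegeneracy, and both obtain (\ref{e69}) by differentiating, using the constancy of $\theta$ and $\nabla I=0$. Your explicit justification of why the conclusion must be read on $D_{2}$ (invariance $T(D_2)\subseteq D_2$ and nondegeneracy of $\overline{g}$ restricted to $D_2$) is a welcome elaboration of a step the paper leaves implicit, but it is not a different argument.
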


\begin{proof}
Using $\overline{g}(TP_{2}X,TP_{2}Y)=\overline{g}((TP_{2})^{2}X,P_{2}Y)$, for any $X$, $Y\in \Gamma(TM)$ and (\ref{e66}), we obtain (\ref{e68}).
Moreover we have:
$$(\nabla_X(TP_{2})^2)Y=\cos^2 \theta(p (\nabla_X TP_{2})Y+q (\nabla_X I)Y )=p\cos^2 \theta \nabla_X (P_{2}T)Y,$$ for any $X\in \Gamma(D_{2})$ and $Y\in \Gamma(TM)$. For the identity $I$ on $\Gamma(D_{2})$ we have $(\nabla_X I)P_{2}Y=0$ thus, we get (\ref{e69}).
\end{proof}

\begin{remark}
A semi-slant submanifold $M$ of a Golden Riemannian manifold $(\overline{M}, \overline{g},J)$ with the slant angle $\theta$ of the distribution $D_{2}$ verifies:
\begin{equation}\label{e70}
(TP_{2})^2=\cos^2 \theta(TP_{2}+I),
\end{equation}
where $I$ is the identity on $\Gamma(D_{2})$ and
\begin{equation}\label{e71}
 \nabla ((TP_{2})^2)=\cos^2 \theta \nabla (TP_{2}).
\end{equation}
\end{remark}

\begin{prop}
Let $M$ be an immersed submanifold of a metallic Riemannian manifold $(\overline{M},\overline{g},J)$. Then $M$ is a semi-slant submanifold in $\overline{M}$ if and only if exists a constant $\lambda \in [0, 1)$ such that:
 $$D=\{ X \in \Gamma(TM) | T^{2}X= \lambda(pTX+qX)\}$$
is a distribution and $NX=0$, for any $X \in \Gamma(TM)$ orthogonal to $D$, where $p$ and $q$ are given in (\ref{e1}).
\end{prop}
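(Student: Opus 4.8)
The plan is to prove both implications by identifying the set $D$ with the slant distribution of a semi-slant decomposition. Throughout I will exploit two algebraic facts coming from $J^{2}=pJ+qI$: first, $J$ is injective on $\Gamma(T\overline{M})$ (if $JX=0$ then $0=J^{2}X=qX$, so $X=0$ since $q\geq 1$), and consequently the operator $pJ+qI=J^{2}$ is invertible.

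For the direct implication, suppose $M$ is semi-slant with invariant distribution $D_{1}$ and slant distribution $D_{2}$ of angle $\theta\neq 0$, and set $\lambda:=\cos^{2}\theta\in[0,1)$. First I would check $D_{2}\subseteq D$: for $X\in\Gamma(D_{2})$ one has $P_{2}X=X$ and, by \eqref{e64}(iii), $TX=TP_{2}X\in\Gamma(D_{2})$, so $(TP_{2})^{2}X=T^{2}X$, and the slant relation \eqref{e68} gives $T^{2}X=\cos^{2}\theta(pTX+qX)=\lambda(pTX+qX)$. For the reverse inclusion, write $X=X_{1}+X_{2}$ with $X_{i}\in\Gamma(D_{i})$; using $TX_{1}=JX_{1}\in\Gamma(D_{1})$ (invariance, so $NX_{1}=0$), hence $T^{2}X_{1}=J^{2}X_{1}=pJX_{1}+qX_{1}$, together with $T^{2}X_{2}=\lambda(pTX_{2}+qX_{2})$, the equation $T^{2}X=\lambda(pTX+qX)$ collapses to $(1-\lambda)(pJX_{1}+qX_{1})=0$. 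Since $\lambda\neq 1$ and $pJ+qI$ is invertible, this forces $X_{1}=0$, i.e. $X\in\Gamma(D_{2})$. Thus $D=D_{2}$ is a distribution, and for $X$ orthogonal to $D$, that is $X\in\Gamma(D_{1})$, invariance gives $NX=0$.

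For the converse, assume $\lambda\in[0,1)$ is such that $D$ is a distribution with $NX=0$ for every $X$ orthogonal to $D$; put $D_{2}:=D$ and $D_{1}:=D^{\perp}$, so $TM=D_{1}\oplus D_{2}$. The crucial observation is that $D$ is $T$-invariant: applying the linear map $T$ to $T^{2}X=\lambda(pTX+qX)$ gives $T^{2}(TX)=\lambda(pT(TX)+q(TX))$, which is exactly the membership condition for $TX\in\Gamma(D)$. With $T(D_{2})\subseteq D_{2}$ in hand, the slant character of $D_{2}$ follows from the characterization \eqref{e100}: for $X\in\Gamma(D_{2})$ one has $P_{D_{2}}TX=TX$ and $(P_{D_{2}}T)^{2}X=T^{2}X=\lambda(pP_{D_{2}}TX+qX)$, so $D_{2}$ is slant with $\cos^{2}\theta_{D_{2}}=\lambda<1$, whence $\theta_{D_{2}}\neq 0$. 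To see that $D_{1}$ is invariant, take $X\in\Gamma(D_{1})$; the hypothesis gives $NX=0$, so $JX=TX$, and for any $Y\in\Gamma(D_{2})$ the symmetry \eqref{e15}(i) together with $TY\in\Gamma(D_{2})$ yields $\overline{g}(JX,Y)=\overline{g}(TX,Y)=\overline{g}(X,TY)=0$, hence $J(D_{1})\subseteq D_{1}$; since $J$ is injective and $D_{1}$ is finite-dimensional, $J|_{D_{1}}$ is a bijection of $D_{1}$, giving $J(D_{1})=D_{1}$. Therefore $M$ is semi-slant.

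The main obstacle is the reverse inclusion $D\subseteq D_{2}$ in the direct part and the $T$-invariance of $D$ in the converse; both hinge precisely on the two facts that $\lambda\neq 1$ and that $pJ+qI$ is invertible, and once these are isolated the remainder is bookkeeping with the decomposition $TM=D_{1}\oplus D_{2}$ and the symmetry \eqref{e15}(i) of $T$.
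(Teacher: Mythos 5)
Your proof is correct, and while the forward implication is essentially the paper's argument, your converse takes a genuinely different route. In the forward direction both you and the paper set $\lambda=\cos^2\theta$, obtain $D_2\subseteq D$ from (\ref{e68}), and kill the $D_1$-component of an arbitrary $X\in\Gamma(D)$ starting from $(1-\lambda)(pJX_1+qX_1)=0$; the paper finishes by deducing $TX_1=-\frac{q}{p}X_1$ and then $X_1=0$, while you invoke the injectivity of $J$ (hence of $pJ+qI=J^2$) --- the same fact in slightly cleaner clothing. In the converse the paper takes $Y\in\Gamma(D^{\bot})$ and $X\in\Gamma(D)$ and expands $\overline{g}(X,J^2Y)$ in two ways (once via the symmetry of $T$ and the defining relation of $D$, once via $J^2=pJ+qI$) to reach $(1-\lambda)\overline{g}(X,TY)=0$ and hence $J(D^{\bot})\subseteq D^{\bot}$, and it simply asserts that the defining identity forces $\cos^2\theta(X)=\lambda$. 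You instead first observe that $D$ is $T$-invariant (by applying $T$ to the defining identity), which lets you legitimately invoke the slant-distribution characterization (\ref{e100}) with $P_DTX=TX$ on $D$, and you then get the invariance of $D^{\bot}$ by the one-line transposition $\overline{g}(JX,Y)=\overline{g}(X,TY)=0$. Your organization buys two things the paper glosses over: the identification $\cos^2\theta_D=\lambda$ genuinely requires knowing $T(D)\subseteq D$ so that $\|P_DTX\|=\|TX\|$, and Definition 6 asks for $J(D_1)=D_1$ rather than the mere inclusion $J(D_1)\subseteq D_1$; your injectivity-plus-finite-rank argument closes that last step, which the paper leaves implicit.
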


\begin{proof}
If we consider $M$ a semi-slant submanifold of the metallic Riemannian manifold $(\overline{M},\overline{g},J)$ then, in (\ref{e66})
we put $ \lambda=\cos^2 \theta \in [0,1)$. Thus, we obtain:
$T^2X=\lambda(pTX+qX)$ and we get $D_{2}\subseteq D$.
For a nonzero vector field $X \in \Gamma(D)$, let $X=X_{1}+X_{2}$, where $X_{1}=P_{1}X \in \Gamma(D_{1})$ and $X_{2}=P_{2}X \in \Gamma(D_{2})$.
Because $D_{1}$ is invariant, then $JX_{1}=TX_{1}$ and using the property of the metallic structure (\ref{e1}), we obtain:
$$pTX_{1}+qX_{1}=pJX_{1}+qX_{1}=J^{2}X_{1}=T^{2}X_{1}=\lambda(pTX_{1}+qX_{1})$$ which implies $(pTX_{1}+qX_{1})(\lambda -1)=0$. Because $\lambda \in [0, 1)$, we obtain $TX_{1} = -\frac{q}{p}X_{1}$ and we get $X_{1}=0$ ($\frac{q^{2}}{p^{2}} \neq0$ because $p$ and $q$ are nonzero natural numbers).  Thus, we obtain $X \in \Gamma(D_{2})$ and
$D\subseteq D_{2}$, which implies $D=D_{2}$. Therefore $D_{1}=D^{\bot}$.

Conversely, if there exists a real number $\lambda\in[0,1)$ such that we have $T^2X=\lambda(pTX+qX)$, for any $X \in \Gamma(D)$, it follows $\cos^2(\theta(X))=\lambda$ which implies that $\theta(X)=\arccos(\sqrt{\lambda})$ does not depend on $X$.

We can consider the orthogonal direct sum $TM=D \oplus D^{\bot}$. For $Y \in \Gamma(D^{\bot}):=\Gamma(D_{1})$ and $X \in \Gamma(D)$ (with $D:=D_{2}$), we have
$$\overline{g}(X,J^{2}Y)=\overline{g}(X,T(JY))=\overline{g}(TX,JY)=\overline{g}(TX,TY)=$$ $$=\overline{g}(T^{2}X,Y)=\lambda[p \overline{g}(TX,Y)+q \overline{g}(X,Y)].$$
From $\overline{g}(X,J^{2}Y)=p \overline{g}(X,JY)+q \overline{g}(X,Y)$ and $\overline{g}(X,Y)=0$, we obtain $\overline{g}(X,JY)=\lambda \overline{g}(X,TY)$ and this implies $(1-\lambda)TY \in \Gamma(D^{\bot})$ and $TY \in \Gamma(D^{\bot})$. Thus $JY \in \Gamma(D^{\bot})$, for any $X \in \Gamma(D^{\bot})$ and we obtain that $ D^{\bot}$ is an invariant distribution.
\end{proof}

\begin{remark}
An immersed submanifold $M$ of the Golden Riemannian manifold $(\overline{M},\overline{g},J)$ is a semi-slant submanifold in $\overline{M}$ if and only if there exists a constant $\lambda \in [0, 1)$ such that $$D=\{ X \in \Gamma(TM) | T^{2}X= \lambda(TX+X)\}$$ is a distribution and $NX=0$, for any $X \in \Gamma(TM)$ orthogonal to $D$.
\end{remark}

\textbf{Example 1:}
Let $\mathbb{R}^{7}$ be the Euclidean space endowed with the usual Euclidean metric $<\cdot,\cdot>$.
Let $f: M \rightarrow \mathbb{R}^{7}$ be the immersion given by:
$$f(u,t_{1},t_{2})=(u \cos t_{1}, u \sin t_{1}, u \cos t_{2}, u \sin t_{2}, u, t_{1}, t_{2}),$$
where $M :=\{(u,t_{1},t_{2})/ u>0, t_{1}, t_{2} \in [0, \frac{\pi}{2}]\}$.

We can find a local orthonormal frame on $TM$ given by:
 $$Z_{1}= \cos t_{1} \frac{\partial}{\partial x_{1}} + \sin t_{1} \frac{\partial}{\partial x_{2}} +
 \cos t_{2} \frac{\partial}{\partial x_{3}} +  \sin t_{2} \frac{\partial}{\partial x_{4}} + \frac{\partial}{\partial y_{1}}, $$
  $$Z_{2}= -u\sin t_{1} \frac{\partial}{\partial x_{1}} + u \cos t_{1} \frac{\partial}{\partial x_{2}} + \frac{\partial}{\partial y_{2}}, \quad
  Z_{3}= -u\sin t_{2} \frac{\partial}{\partial x_{3}} + u \cos t_{2} \frac{\partial}{\partial x_{4}} + \frac{\partial}{\partial y_{3}}.$$

We define the metallic structure $J : \mathbb{R}^{7} \rightarrow \mathbb{R}^{7} $ given by:
 \begin{equation}\label{e72}
 J(\frac{\partial}{\partial x_{i}},\frac{\partial}{\partial y_{j}})=
 (\sigma \frac{\partial}{\partial x_{1}},\sigma \frac{\partial}{\partial x_{2}},\overline{\sigma}\frac{\partial}{\partial x_{3}},\overline{\sigma} \frac{\partial}{\partial x_{4}},\overline{\sigma} \frac{\partial}{\partial y_{1}}, \sigma \frac{\partial}{\partial y_{2}},\overline{\sigma}\frac{\partial}{\partial y_{3}}),
  \end{equation}
 for $i \in \{1,2,3,4\}$ and  $j \in \{1,2,3\}$ where $\sigma:=\sigma_{p,q}=\frac{p+\sqrt{p^{2}+4q}}{2}$ is the metallic number ($p, q \in N^{*}$) and $\overline{\sigma}=p-\sigma$.  Thus, we obtain:
 $$JZ_{1}= \sigma\cos t_{1} \frac{\partial}{\partial x_{1}} + \sigma \sin t_{1} \frac{\partial}{\partial x_{2}} +
 \overline{\sigma}\cos t_{2} \frac{\partial}{\partial x_{3}} +  \overline{\sigma}\sin t_{2} \frac{\partial}{\partial x_{4}} +\overline{\sigma} \frac{\partial}{\partial y_{1}}, $$
  $$ JZ_{2}= \sigma Z_{2}, \quad JZ_{3}= \overline{\sigma} Z_{3}.$$
 We can verify that
 $\|Z_{1}\|^{2}=3, \quad \|Z_{2}\|^{2}=\|Z_{3}\|^{2}=u^{2}+1$ and
  $$ \|JZ_{1}\|^{2}= \sigma^{2}+2\overline{\sigma}^{2},\quad \|JZ_{2}\|^{2}=\sigma^{2}(u^{2}+1), \quad \|JZ_{3}\|^{2}=\overline{\sigma}^{2}(u^{2}+1).$$

 On the other hand, we have $<JZ_{1}, Z_{1}>=\sigma + 2\overline{\sigma}$ and $<JZ_{i},Z_{j}>=0,$ for any $i \neq j$, where $i,j \in \{1,2,3\}$.
  We remark that:
 $$ \cos \theta = \frac{<JZ_{1},Z_{1}>}{\|Z_{1}\|\cdot \|JZ_{1}\|}=\frac{\sigma+ 2\overline{\sigma}}{\sqrt{3(\sigma^{2}+2 \overline{\sigma}^{2})}}.$$
 Moreover $\frac{\sigma+ 2\overline{\sigma}}{\sqrt{3(\sigma^{2}+2\overline{\sigma}^{2})}}<1$ and it is a constant.
We define the distributions $D_{1}=span\{Z_{2}, Z_{3}\}$ and $D_{2}=span\{Z_{1}\}$. We have $J(D_{1})\subset D_{1}$ (i.e. $D_{1}$ is an invariant distribution with respect to $J$).
The Riemannian metric tensor of $D_{1} \oplus D_{2}$ is given by
$
g=3du^{2} + (u^{2}+1)(d t_{1}^{2}+d t_{2}^{2}).
$
Thus, $D_{1} \oplus D_{2}$ is a warped product semi-slant submanifold in the metallic Riemannian manifold  $(\mathbb{R}^{7}, <\cdot,\cdot>, J)$
with the slant angle $\arccos(\frac{\sigma+ 2\overline{\sigma}}{\sqrt{3(\sigma^{2}+2\overline{\sigma}^{2})}})$.

If $J$ is the Golden structure $J : \mathbb{R}^{7} \rightarrow \mathbb{R}^{7}$ given by:
 \begin{equation}\label{e73}
 J(\frac{\partial}{\partial x_{i}},\frac{\partial}{\partial y_{j}})=
 (\phi \frac{\partial}{\partial x_{1}},\phi \frac{\partial}{\partial x_{2}},\overline{\phi}\frac{\partial}{\partial x_{3}},\overline{\phi} \frac{\partial}{\partial x_{4}},\overline{\phi} \frac{\partial}{\partial y_{1}}, \phi \frac{\partial}{\partial y_{2}},\overline{\phi}\frac{\partial}{\partial y_{3}}),
  \end{equation}
for $i \in \{1,2,3,4\}$ and  $j \in \{1,2,3\}$, where $\phi:=\frac{1+\sqrt{5}}{2}$ is the Golden number and $\overline{\phi}=1-\phi$, in the same manner we obtain:
 $$ \cos \theta = \frac{<JZ_{1},Z_{1}>}{\|Z_{1}\|\cdot\|JZ_{1}\|}=\frac{\phi+2\overline{\phi}}{\sqrt{3(\phi^{2}+2\overline{\phi}^{2})}}.$$  We define the distributions $D_{1}=span\{Z_{2}, Z_{3}\}$ and $D_{2}=span\{Z_{1}\}$. We obtain that $D_{1} \oplus D_{2}$ is a warped product semi-slant submanifold in the Golden Riemannian manifold  $(\mathbb{R}^{7}, <\cdot,\cdot>, J)$, with the slant angle $\arccos(\frac{\phi+2\overline{\phi}}{\sqrt{3(\phi^{2}+2\overline{\phi}^{2})}})$.

\textbf{Example 2:}
Let $M:=\{(u,\alpha_{1},\alpha_{2},...,\alpha_{n})/ u > 0, \alpha_{i} \in [0, \frac{\pi}{2}], i \in \{1,...,n\} \}$ and $f:M\rightarrow \mathbb{R}^{3n+1}$ is the immersion given by:
\begin{equation} \label{e74}
f(u,\alpha_{1},...,\alpha_{n})=(u \cos\alpha_{1},...,u \cos\alpha_{n},u \sin\alpha_{1},...,u \sin\alpha_{n},\alpha_{1},...,\alpha_{n},u).
\end{equation}

We can find a local orthonormal frame of the submanifold $TM$ in $\mathbb{R}^{3n+1}$, spanned by the vectors:
\begin{equation} \label{e75}
Z_{0}=\sum_{j=1}^{n} (\cos \alpha_{j}\frac{\partial}{\partial x_{j}}+\sin \alpha_{j}\frac{\partial}{\partial x_{n+j}})+\frac{\partial}{\partial x_{3n+1}},
\end{equation}
and
\begin{equation} \label{e750}
Z_{i}= -u \sin \alpha_{i}\frac{\partial}{\partial x_{i}}+u \cos \alpha_{i}\frac{\partial}{\partial x_{n+i}}+\frac{\partial}{\partial x_{2n+i}},
\end{equation}
for any $i \in \{1,...,n\}$.

We remark that $\|Z_{0}\|^{2}=n+1$, $\|Z_{i}\|^{2}=u^{2}+1$, for any $i \in \{1,...,n\}$,
 $Z_{0} \bot Z_{i}$, for any $i \in \{1,...,n\}$ and $Z_{i} \bot Z_{j}$, for $i \neq j$, where $i, j \in \{1,...,n\}$.

Let $J:\mathbb{R}^{3n+1}\rightarrow \mathbb{R}^{3n+1}$ be the $(1,1)$-tensor field defined by:
\begin{equation} \label{e76}
J(X^{1},...,X^{3n},X^{3n+1})=(\sigma X^{1},...,\sigma X^{3n},\overline{\sigma}X^{3n+1}),
\end{equation}
where $\sigma:=\sigma_{p,q}$ is the metallic number and $\overline{\sigma}=p-\sigma$. It is easy to verify that $J$ is a metallic structure on $\mathbb{R}^{3n+1}$ (i.e. $J^{2}=pJ+qI$).

Moreover, the metric $\overline{g}$, given by the scalar product $<\cdot,\cdot>$ on $\mathbb{R}^{3n+1}$, is $J$ compatible and $(\mathbb{R}^{3n+1},\overline{g},J)$ is a metallic Riemannian manifold.

From (\ref{e75}) and (\ref{e750}) we get:
$$
JZ_{0}=\sigma \sum_{j=1}^{n}(\cos \alpha_{j}\frac{\partial}{\partial x_{j}}+\sin \alpha_{j}\frac{\partial}{\partial x_{n+j}})+ \overline{\sigma}\frac{\partial}{\partial x_{3n+1}}
$$
and
$$JZ_{i}= \sigma ( -u \sin \alpha_{i}\frac{\partial}{\partial x_{i}} + u  \cos \alpha_{i}\frac{\partial}{\partial x_{n+i}}+ \frac{\partial}{\partial x_{2n+i}})=\sigma Z_{i},$$
for any $i \in \{1,...,n\}$.

We can verify that $JZ_{0}$ is orthogonal to $span\{Z_{1},...,Z_{n}\}$ and
\begin{equation} \label{e77}
\cos(\widehat{JZ_{0},Z_{0}}) = \frac{n\sigma +\overline{\sigma}}{\sqrt{(n+1)(n\sigma^{2} +\bar{\sigma}^{2})}}.
\end{equation}
If we consider the distributions $D_{1}=span \{Z_{i} / i \in \{1,...,n\}\}$ and $D_{2}=span\{Z_{0}\}$, then $D_{1} \oplus D_{2}$ is a semi-slant submanifold in the metallic Riemannian manifold $(\mathbb{R}^{3n+1},<\cdot,\cdot>,J)$, with the Riemannian metric tensor $$g=(n+1) du^{2} + (u^{2}+1)\sum_{j=1}^{n}d\alpha_{j}^{2}.$$


\section{On the integrability of the distributions of semi-slant submanifolds }

In this section we investigate the conditions for the integrability of the distributions of semi-slant submanifolds in metallic (or Golden) Riemannian manifolds.

\begin{theorem}
If $M$ is a semi-slant submanifold of a locally metallic (or locally Golden) Riemannian manifold $(\overline{M},\overline{g},J)$, then:\\
(i) the distribution $D_{1}$ is integrable if and only if
\begin{equation}\label{e78}
(\nabla_{Y}u_{\alpha})(X)=(\nabla_{X}u_{\alpha})(Y),
\end{equation}
 for any $X,Y \in \Gamma(D_{1})$;\\
(ii) the distribution $D_{2}$ is integrable if and only if
\begin{equation}\label{e79}
P_{1}(\nabla_{X}TY-\nabla_{Y}TX)=\sum_{i=1}^{r}[u_{\alpha}(Y)P_{1}(A_{\alpha}X)-u_{\alpha}(X)P_{1}(A_{\alpha}Y)],
\end{equation}
 for any $X,Y \in \Gamma(D_{2})$.
\end{theorem}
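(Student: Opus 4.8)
The plan is to convert the geometric integrability conditions $[X,Y]\in\Gamma(D_1)$ and $[X,Y]\in\Gamma(D_2)$ into the stated tensorial identities, exploiting the fact that the invariant part $D_1$ and the slant part $D_2$ of a bracket are detected, respectively, by the vanishing of the normal map $N$ and of the $D_1$-component of $T$. First I would record the key separation principle. Since $D_1$ is invariant, $NP_1Z=0$ for every $Z\in\Gamma(TM)$ by (\ref{e64})(ii), so $NZ=NP_2Z$. For $W\in\Gamma(D_2)$ the slant relation (\ref{e65}) gives $\|TW\|^2=\cos^2\theta\,\|JW\|^2$, and since $JW=TW+NW$ is an orthogonal (tangential/normal) decomposition, $\|NW\|^2=\sin^2\theta\,\|JW\|^2$. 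As $J$ is invertible (from $J^2=pJ+qI$ with $q\neq 0$ one gets $J^{-1}=\tfrac1q(J-pI)$), $JW=0$ forces $W=0$; hence for $\theta\neq 0$ one has $NW=0$ if and only if $W=0$. Combining these, for any $Z\in\Gamma(TM)$ we obtain $NZ=0\Leftrightarrow P_2Z=0\Leftrightarrow Z\in\Gamma(D_1)$. In particular $[X,Y]\in\Gamma(D_1)$ if and only if $N([X,Y])=0$.

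For part (i), take $X,Y\in\Gamma(D_1)$. Invariance of $D_1$ gives $NX=NY=0$, and by (\ref{e25})(i) this means $u_\alpha(X)=u_\alpha(Y)=0$ for all $\alpha$. Substituting into (\ref{e44}) annihilates the $l_{\alpha\beta}$-terms, leaving $N([X,Y])=\sum_{\alpha}\big[(\nabla_Y u_\alpha)X-(\nabla_X u_\alpha)Y\big]N_\alpha$. By the separation principle $D_1$ is integrable if and only if $N([X,Y])=0$, that is, if and only if $(\nabla_Y u_\alpha)(X)=(\nabla_X u_\alpha)(Y)$ for every $\alpha$, which is exactly (\ref{e78}).

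For part (ii), I would instead read off the $D_1$-component of a bracket through $T$. For $W\in\Gamma(D_1)$ invariance gives $TW=JW\in\Gamma(D_1)$, while (\ref{e64})(iii) gives $TP_2\in\Gamma(D_2)$; hence applying $P_1$ to $T([X,Y])=T(P_1[X,Y])+T(P_2[X,Y])$ yields $P_1(T[X,Y])=J(P_1[X,Y])$. Since $J$ restricts to an invertible endomorphism of $D_1$ (because $J(D_1)=D_1$ and $J$ is globally invertible), we have $P_1[X,Y]=0$ if and only if $P_1(T[X,Y])=0$; thus $D_2$ is integrable precisely when $P_1(T[X,Y])=0$ for all $X,Y\in\Gamma(D_2)$. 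Projecting the expression (\ref{e43}) for $T([X,Y])$ onto $D_1$ and equating to zero gives precisely (\ref{e79}).

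The main obstacle is not the final substitutions, which are immediate from (\ref{e43}) and (\ref{e44}), but the two detection principles: that on a distribution which is slant with $\theta\neq 0$ the normal part $NW$ vanishes only for $W=0$, and that the $D_1$-component of a tangent vector is recovered from $T$ up to the invertible isomorphism $J|_{D_1}$. Once these are established, together with the vanishing of the $1$-forms $u_\alpha$ on the invariant distribution $D_1$, both equivalences follow by simply reading off tangential and normal components.
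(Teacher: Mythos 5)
Your proof is correct and follows essentially the same route as the paper: both parts reduce integrability of $D_1$ to $N([X,Y])=0$ and integrability of $D_2$ to $P_1T([X,Y])=0$, and then substitute the bracket formulas (\ref{e43}) and (\ref{e44}), using $u_\alpha(X)=u_\alpha(Y)=0$ on the invariant distribution to kill the $l_{\alpha\beta}$-terms. The only difference is that you explicitly justify the two detection equivalences (via $\|NW\|^{2}=\sin^{2}\theta\,\|JW\|^{2}$ on the proper slant distribution $D_2$ and the invertibility of $J|_{D_1}$), which the paper simply asserts.
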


\begin{proof}
(i) For $X, Y \in \Gamma(D_{1})$, we have $X=P_{1}X$ and $Y=P_{1}Y$. The distribution $D_{1}$ is integrable if and only if $[X,Y] \in \Gamma(D_{1})$, which is equivalent to $N([X,Y])=0$, for any $X, Y \in \Gamma(D_{1})$. From $J(D_{1})\subseteq D_{1}$ we obtain $NX=NY=0$ and from (\ref{e25})(i) we get $u_{\alpha}(X)l_{\alpha\beta}(Y)=u_{\alpha}(Y)l_{\alpha\beta}(X)=0$. Thus, using (\ref{e44}) we have the distribution $D_{1}$ is integrable if and only if  (\ref{e78}) holds.

(ii) For $X, Y \in \Gamma(D_{2})$, we have $X=P_{2}X$, $Y=P_{2}Y$. The distribution $D_{2}$ is integrable if and only if $[X,Y] \in \Gamma(D_{2})$, which is equivalent to $P_{1}T([X,Y])=0$. Thus, from (\ref{e43}), we obtain the distribution $D_{2}$ is integrable if and only if (\ref{e79}) holds.
 \end{proof}

\begin{remark}
 If $M$ is a semi-slant submanifold of a locally metallic (or locally Golden) Riemannian manifold $(\overline{M},\overline{g},J)$, then:\\
(i) the distribution $D_{1}$ is integrable if and only if
\begin{equation}\label{e80}
h(X,TY)=h(TX,Y),
\end{equation}
 for any $X,Y \in \Gamma(D_{1})$;\\
(ii) the distribution  $D_{1}$ is integrable if and only if the shape operator of $M$ satisfies:
 \begin{equation}\label{e81}
 J A_{V}X = A_{V}JX,
 \end{equation}
 for any  $X \in \Gamma(D_{1})$ and $V \in \Gamma(T^{\bot}M)$;  \\
(iii) the distribution $D_{2}$ is integrable if and only if
 \begin{equation}\label{e82}
 P_{1}(\nabla_{X}TY-\nabla_{Y}TX)=P_{1}(A_{NY}X-A_{NX}Y),
 \end{equation}
 for any $X,Y \in \Gamma(D_{2})$.
\end{remark}

\begin{proof}
(i) For any $X,Y \in \Gamma(D_{1})$, we have $[X,Y] \in \Gamma(D_{1})$ if and only if $N([X,Y])=0$ and from (\ref{e37}), we obtain (\ref{e80}).

(ii) For any $X,Y \in \Gamma(D_{1})$ and any $V \in \Gamma(T^{\bot}M)$, from (\ref{e31}) and (\ref{e2}) we have:
$$g( J A_{V}X - A_{V}JX, Y) = g(h(X,JY)-h(JX,Y),V).$$
From (\ref{e37}) and $NX=NY=0$ (because $J(D_{1})\subseteq D_{1}$) we have $$g( J A_{V}X - A_{V}JX, Y)= g(N([X,Y]),V)=0,$$
for any $X,Y \in \Gamma(D_{1})$ and any $V \in \Gamma(T^{\bot}M)$. Thus, we have $[X,Y] \in \Gamma(D_{1})$.

(iii) For any $X, Y \in \Gamma(D_{2})$, we have $X=P_{2}X$ and $Y=P_{2}Y$. The distribution $D_{2}$ is integrable if and only if $[X,Y] \in \Gamma(D_{2})$, which is equivalent to $T([X,Y]) \in \Gamma(D_{2})$  or  $P_{1}T([X,Y])=0$. Thus, from (\ref{e36}), we obtain that $D_{2}$ is integrable if and only if (\ref{e82}) holds.
\end{proof}

\begin{theorem}
Let $M$  be a semi-slant submanifold of a locally metallic (or locally Golden) Riemannian manifold ($\overline{M},\overline{g},J)$. If $\nabla T =0$, then the distributions $D_{1}$ and $D_{2}$ are integrable.
\end{theorem}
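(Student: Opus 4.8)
The plan is to read the hypothesis $\nabla T=0$ not through the integrability criteria (\ref{e78}) and (\ref{e79}) directly, but as the statement that two natural $(1,1)$-tensors built from $T$ are parallel, and then to identify $D_{1}$ and $D_{2}$ as their kernels. Since the kernel of a parallel tensor is an autoparallel (hence integrable) distribution, both assertions follow simultaneously, and in fact in the stronger form that $D_{1}$ and $D_{2}$ are totally geodesic distributions.

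First I would record the two pointwise quadratic identities satisfied by $T$ on each summand. On the invariant part, $J(D_{1})=D_{1}$ gives $TX=JX$ for $X\in\Gamma(D_{1})$, so by (\ref{e1}) we have $T^{2}X=pTX+qX$ there; on the slant part, (\ref{e68}) together with $TP_{2}\subseteq D_{2}$ (see (\ref{e64})(iii)) yields $T^{2}Y=\cos^{2}\theta\,(pTY+qY)$ for $Y\in\Gamma(D_{2})$. Introduce the global tensors
\[ C:=T^{2}-pT-qI,\qquad B:=T^{2}-\cos^{2}\theta\,(pT+qI). \]
Both preserve the splitting $TM=D_{1}\oplus D_{2}$ because they are polynomials in $T$ and $T(D_{1})\subseteq D_{1}$, $T(D_{2})\subseteq D_{2}$. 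On $D_{1}$ one computes $C=0$ and $B=\sin^{2}\theta\,(pT+qI)$, while on $D_{2}$ one gets $B=0$ and $C=-\sin^{2}\theta\,(pT+qI)$. The operator $pT+qI$ is invertible on each summand: on $D_{1}$ its eigenvalues are the squares of the two nonzero eigenvalues of $J$, and on $D_{2}$ the relation $(pT+qI)Z=0$ forces $T^{2}Z=\frac{q^{2}}{p^{2}}Z=0$, whence $Z=0$. Since $\sin^{2}\theta\neq 0$ (the slant angle satisfies $\theta\neq 0$), it follows that $\ker C=D_{1}$ and $\ker B=D_{2}$.

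Next I would differentiate. As $\theta$ is constant and $\nabla T=0$ forces $\nabla(T^{2})=0$, the tensors $C$ and $B$ have constant coefficients and hence $\nabla C=\nabla B=0$. For $X\in\Gamma(T M)$ and $Y\in\Gamma(D_{2})=\ker B$ this gives $0=(\nabla_{X}B)Y=\nabla_{X}(BY)-B(\nabla_{X}Y)=-B(\nabla_{X}Y)$, so $\nabla_{X}Y\in\ker B=D_{2}$; thus $D_{2}$ is parallel and in particular $[X,Y]=\nabla_{X}Y-\nabla_{Y}X\in\Gamma(D_{2})$ for $X,Y\in\Gamma(D_{2})$, which is the integrability of $D_{2}$. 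The identical argument with $C$ in place of $B$ shows $\nabla_{X}Y\in\Gamma(D_{1})$ for $Y\in\Gamma(D_{1})=\ker C$, so $D_{1}$ is parallel and integrable as well.

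The step I expect to be the real obstacle is that a naive verification of the criteria of the previous theorem does not close for $D_{2}$. For $D_{1}$ one can still argue directly: $\nabla T=0$ together with (\ref{e40}) gives $th(X,Z)=0$ for every $Z\in\Gamma(D_{1})$, hence $t(N[X,Y])=th(X,TY)-th(TX,Y)=0$ by (\ref{e37}); pairing $N[X,Y]=N(P_{2}[X,Y])$ with $P_{2}[X,Y]$ via (\ref{e16}) forces $N[X,Y]=0$, and the injectivity of $N$ on the proper slant distribution $D_{2}$ (from $\|NZ\|^{2}=\sin^{2}\theta\,\|JZ\|^{2}$, see (\ref{e67})) yields $[X,Y]\in\Gamma(D_{1})$. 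For $D_{2}$, however, substituting $\nabla T=0$ and its consequence $A_{NX}Y=A_{NY}X$ into (\ref{e79}) (equivalently into (\ref{e36})) collapses both sides to $P_{1}T[X,Y]=J\,P_{1}[X,Y]$, which is tautologically equivalent to the very integrability one wants to prove. This circularity is precisely why I route the argument through the parallel tensors $C$ and $B$: their kernels are forced to be autoparallel, a genuinely stronger and non-circular conclusion.
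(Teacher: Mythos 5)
Your proof is correct, and it takes a genuinely different route from the paper's. The paper argues through the second fundamental form: from (\ref{e40})(i), $\nabla T=0$ and $NY=0$ give $th(X,Y)=0$ for $X,Y\in\Gamma(D_{1})$, and a chain of metric identities built on (\ref{e67}) then forces $P_{2}\nabla_{X}Y=0$, i.e. $D_{1}$ is autoparallel; the integrability of $D_{2}$ is then asserted from the orthogonality of the two distributions. You instead encode the hypothesis as the parallelism of the two polynomial tensors $C=T^{2}-pT-qI$ and $B=T^{2}-\cos^{2}\theta\,(pT+qI)$, identify $D_{1}=\ker C$ and $D_{2}=\ker B$ (the invertibility of $pT+qI$ on each summand is the key pointwise fact, and your checks of it are right), and conclude that both kernels are parallel distributions. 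This buys two things. It is shorter and purely algebraic, making no use of $h$, $A$ or the Gauss--Weingarten formulas; and it delivers the stronger conclusion $\nabla_{X}Y\in\Gamma(D_{i})$ for \emph{every} $X\in\Gamma(TM)$, which is exactly what is needed to pass from $D_{1}$ to $D_{2}$ --- the paper's closing remark that $D_{2}$ is integrable because it is orthogonal to $D_{1}$ tacitly requires this full parallelism, which its computation (restricted to $X,Y\in\Gamma(D_{1})$) does not quite establish. Your diagnosis of the circularity in the naive approach is also accurate: with (\ref{e41}), the hypothesis $\nabla T=0$ makes the right-hand side of (\ref{e79}) vanish and turns the left-hand side into $P_{1}T([X,Y])$, so the criterion reduces to the very statement to be proved.
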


\begin{proof}
First of all, we consider $X , Y \in \Gamma(D_{1})$ and we proof $[X,Y] \in \Gamma(D_{1})$.
For any $Y \in \Gamma(D_{1})$, we get $NY =0$ and using $\nabla T =0$ in (\ref{e40})(i) we obtain  $th(X,Y)=0$, for any $X, Y \in \Gamma(D_{1})$, which implies $Jh(X,Y) = n h(X,Y)$. From
$$\overline{g}(th(X,Y),Z)=\overline{g}(Jh(X,Y),Z)=\overline{g}(h(X,Y),JZ)$$
 and (\ref{e63}) we get $\overline{g}(h(X,Y),NP_{2}Z)=0$, for any $X, Y \in \Gamma(D_{1})$ and $Z \in \Gamma(TM)$. Thus, from (\ref{e1}) and (\ref{e2}) we get
$$\overline{g}(Jh(X,Y),JZ)=\overline{g}(J^{2}h(X,Y),Z)=p \overline{g}(J h(X,Y),Z)+q \overline{g}(h(X,Y),Z)=0,$$
for any $X, Y \in \Gamma(D_{1})$ and $Z \in \Gamma(TM)$.
Moreover, for $Z=\nabla_{X}Y$, we obtain:
$$ 0=\overline{g}(Jh(X,Y),NP_{2}\nabla_{X}Y)=\overline{g}(\overline{\nabla}_{X}JY,NP_{2}\nabla_{X}Y) -\overline{g}(J\nabla_{X}Y ,NP_{2}\nabla_{X}Y),$$
which implies:
\begin{equation}\label{e83}
\overline{g}(h(X,JY),NP_{2}\nabla_{X}Y) =\overline{g}(NP_{2}\nabla_{X}Y ,NP_{2}\nabla_{X}Y).
\end{equation}
On the other hand, from (\ref{e67}) and  (\ref{e83}) we have:
\begin{equation}\label{e84}
\overline{g}(h(X,JY),NP_{2}\nabla_{X}Y)=\sin^{2}\theta [p \overline{g}(TP_{2}\nabla_{X}Y ,P_{2}\nabla_{X}Y)+ q \overline{g}(P_{2}\nabla_{X}Y ,P_{2}\nabla_{X}Y)].
\end{equation}
Using (\ref{e83}), (\ref{e16}) and $JY=TY$, for any $Y \in \Gamma(D_{1})$,  we obtain:
 \begin{equation}\label{e840}
\overline{g}(h(X,JY),NP_{2}\nabla_{X}Y)= \overline{g}(t h(X,TY),P_{2}\nabla_{X}Y)=0.
\end{equation}
Thus, from (\ref{e83}) and (\ref{e840}) we have:
$$\sin^{2}\theta [p \overline{g}(TP_{2}\nabla_{X}Y ,P_{2}\nabla_{X}Y)+ q \overline{g}(P_{2}\nabla_{X}Y ,P_{2}\nabla_{X}Y)]=0.$$
From $\theta \neq 0$,  we obtain $p \overline{g}(TP_{2}\nabla_{X}Y ,P_{2}\nabla_{X}Y)+ q \overline{g}(P_{2}\nabla_{X}Y ,P_{2}\nabla_{X}Y)=0$
and using $\overline{g}(TP_{2}\nabla_{X}Y ,P_{2}\nabla_{X}Y)=\overline{g}(JP_{2}\nabla_{X}Y ,P_{2}\nabla_{X}Y)$ we have:
$$
\overline{g}(J^{2}P_{2}\nabla_{X}Y ,P_{2}\nabla_{X}Y)=p\overline{g}(JP_{2}\nabla_{X}Y ,P_{2}\nabla_{X}Y)+q\overline{g}(P_{2}\nabla_{X}Y ,P_{2}\nabla_{X}Y)=0,
$$
which implies $\overline{g}( J(P_{2}\nabla_{X}Y),J(P_{2}\nabla_{X}Y))=0$. Thus, we get $J(P_{2}\nabla_{X}Y)=0$ and we obtain $P_{2}\nabla_{X}Y=0$. In conclusion, $\nabla_{X}Y \in \Gamma(D_{1})$ for any $X, Y \in \Gamma(D_{1})$ and this implies $[X,Y] \in \Gamma(D_{1})$. Thus, the distribution $D_{1}$ is integrable.

Moreover, because $D_{2}$ is orthogonal to $D_{1}$ and ($\overline{M},\overline{g})$ is a Riemannian manifold, we obtain the integrability of the distribution $D_{2}$.
\end{proof}

In the next propositions, we consider semi-slant submanifolds in the locally metallic (or locally Golden) Riemannian manifolds and we find some conditions for these submanifolds to be $D_{1} - D_{2}$ mixed totally geodesic (i.e. $h(X,Y)=0$, for any $X \in \Gamma(D_{1})$ and $Y \in \Gamma(D_{2})$), in a similar manner as in the case of semi-slant submanifolds in locally product manifolds (\cite{Li&Liu}).

\begin{prop}
If $M$ is a semi-slant submanifold of a locally metallic (or locally Golden) Riemannian manifold $(\overline{M},\overline{g},J)$, then $M$ is a $D_{1}-D_{2}$ mixed totally geodesic submanifold if and only if $A_{V}X \in \Gamma(D_{1})$ and  $A_{V}Y \in \Gamma(D_{2})$, for any $X \in \Gamma(D_{1})$, $Y \in \Gamma(D_{2})$ and $V \in \Gamma(T^{\bot}M)$.
\end{prop}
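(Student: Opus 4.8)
The plan is to convert both the mixed-totally-geodesic hypothesis and the two membership conditions into statements about the scalar $\overline{g}(h(X,Y),V)$, and then exploit the fundamental relation (\ref{e31}) between the second fundamental form and the shape operator, together with the orthogonal splitting $TM=D_{1}\oplus D_{2}$ supplied by Definition \ref{d6}. First I would recall that $M$ being $D_{1}-D_{2}$ mixed totally geodesic means precisely $h(X,Y)=0$ for every $X\in\Gamma(D_{1})$ and $Y\in\Gamma(D_{2})$, and that by (\ref{e31}) this is equivalent to $\overline{g}(A_{V}X,Y)=0$ for all such $X$, $Y$ and all $V\in\Gamma(T^{\bot}M)$.

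For the forward implication I would assume $h(X,Y)=0$ and fix $X\in\Gamma(D_{1})$ together with $V\in\Gamma(T^{\bot}M)$. Since $TM=D_{1}\oplus D_{2}$ is an orthogonal decomposition, to show $A_{V}X\in\Gamma(D_{1})$ it suffices to verify that $A_{V}X$ is $\overline{g}$-orthogonal to every $Y\in\Gamma(D_{2})$; and indeed (\ref{e31}) gives $\overline{g}(A_{V}X,Y)=\overline{g}(h(X,Y),V)=0$. Symmetrically, fixing $Y\in\Gamma(D_{2})$, the membership $A_{V}Y\in\Gamma(D_{2})$ follows from $\overline{g}(A_{V}Y,X)=\overline{g}(h(X,Y),V)=0$ for all $X\in\Gamma(D_{1})$, where I also use the symmetry $h(X,Y)=h(Y,X)$ of the second fundamental form.

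For the converse I would assume the two membership conditions and take arbitrary $X\in\Gamma(D_{1})$, $Y\in\Gamma(D_{2})$ and $V\in\Gamma(T^{\bot}M)$. Relation (\ref{e31}) yields $\overline{g}(h(X,Y),V)=\overline{g}(A_{V}X,Y)$; since $A_{V}X\in\Gamma(D_{1})$ and $Y\in\Gamma(D_{2})$ lie in mutually orthogonal distributions, this inner product vanishes. As $V$ ranges over all of $\Gamma(T^{\bot}M)$, we conclude $h(X,Y)=0$, so $M$ is $D_{1}-D_{2}$ mixed totally geodesic.

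The main point is that there is no genuine obstacle here: the whole argument is a short bookkeeping exercise combining (\ref{e31}) with the orthogonality of $D_{1}$ and $D_{2}$. The only subtlety worth flagging is that a single membership condition already forces $h(X,Y)=0$ in the converse, while the direct implication naturally produces both conditions at once; I would therefore phrase the equivalence symmetrically, using $\overline{g}(A_{V}X,Y)=\overline{g}(A_{V}Y,X)$, rather than suggesting that the two memberships play asymmetric roles.
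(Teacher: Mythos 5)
Your argument is correct and coincides with the paper's own proof: both reduce the mixed totally geodesic condition $h(X,Y)=0$ to $\overline{g}(A_{V}X,Y)=\overline{g}(A_{V}Y,X)=0$ via relation (3.23) and then read off the membership conditions from the orthogonality of $D_{1}$ and $D_{2}$. Your version merely spells out the bookkeeping in slightly more detail than the paper does.
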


\begin{proof}
From (\ref{e31}) we remark that $M$ is a $D_{1}-D_{2}$ mixed totally geodesic  submanifolds in the locally metallic (or locally Golden) Riemannian manifolds if and only if $g(A_{V}X,Y)=g(A_{V}Y,X)=0$, for any $X \in \Gamma(D_{1}), Y \in \Gamma(D_{2})$ and $V \in \Gamma(T^{\bot}M)$, which is equivalent to $A_{V}X \in \Gamma(D_{1})$ and  $A_{V}Y \in \Gamma(D_{2})$.
\end{proof}

\begin{prop}\label{p50}
Let $M$  be a proper semi-slant submanifold of a locally metallic (or locally Golden) Riemannian manifold ($\overline{M},\overline{g},J)$.
If $M$ is a $D_{1} - D_{2}$ mixed totally geodesic submanifold then $(\overline{\nabla}_{X}N)Y=0$, for any $X \in \Gamma(D_{1})$ and $Y \in \Gamma(D_{2})$.
\end{prop}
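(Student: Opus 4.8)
The plan is to compute $(\overline{\nabla}_X N)Y$ directly from the formula for the covariant derivative of $N$ established in Proposition (\ref{e40}). For any $X,Y\in\Gamma(TM)$ that identity reads
\begin{equation*}
(\overline{\nabla}_X N)Y = nh(X,Y)-h(X,TY),
\end{equation*}
so the whole statement reduces to showing that each of the two terms on the right-hand side vanishes under the hypotheses $X\in\Gamma(D_1)$, $Y\in\Gamma(D_2)$ and $M$ being $D_1-D_2$ mixed totally geodesic.

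First I would dispose of the term $nh(X,Y)$. By the very definition of $D_1-D_2$ mixed totally geodesic, we have $h(X,Y)=0$ for every $X\in\Gamma(D_1)$ and $Y\in\Gamma(D_2)$; applying the normal-bundle endomorphism $n$ then gives $nh(X,Y)=0$ immediately.

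The second term $h(X,TY)$ is where the argument has to be a little more attentive. The key observation is that the slant distribution $D_2$ is $T$-invariant: since $Y\in\Gamma(D_2)$ we have $P_2Y=Y$, and (\ref{e64})(iii) yields $TY=TP_2Y\in\Gamma(D_2)$. Hence $TY$ is again a section of $D_2$, and I can invoke the mixed-totally-geodesic hypothesis a second time, now applied to the pair $X\in\Gamma(D_1)$ and $TY\in\Gamma(D_2)$, to conclude $h(X,TY)=0$. Substituting both vanishing terms back into the displayed formula gives $(\overline{\nabla}_X N)Y=0$, which is the desired conclusion.

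I do not expect any genuine technical obstacle here: the proof is a two-step substitution into (\ref{e40})(ii). The only point that genuinely needs care is the remark that $T$ preserves $D_2$, for that is precisely what allows the mixed-totally-geodesic assumption to be used twice — once on $(X,Y)$ and once on $(X,TY)$ — rather than only once; the semi-slant (proper) hypothesis enters merely to guarantee that $D_2$ is a nontrivial slant distribution so that $T$ acts on it in the expected way.
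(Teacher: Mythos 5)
Your proof is correct and follows essentially the same route as the paper: both substitute into (\ref{e40})(ii) and observe that $nh(X,Y)=0$ and $h(X,TY)=0$ under the mixed totally geodesic hypothesis, the latter because $TY=TP_2Y\in\Gamma(D_2)$ by (\ref{e64})(iii). Your write-up merely makes explicit the $T$-invariance of $D_2$ that the paper's one-line argument leaves implicit.
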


\begin{proof}
 If $M$ is a $D_{1}-D_{2}$ mixed geodesic submanifold, then $nh(X,Y)=h(X,TY)$ and using (\ref{e40})(ii), we obtain $(\overline{\nabla}_{X}N)Y=0$, for any $X \in \Gamma(D_{1})$, $Y \in \Gamma(D_{2})$.
\end{proof}

\begin{theorem}\label{t5}
Let $M$ be a proper semi-slant submanifold of a locally metallic (or locally Golden) Riemannian manifold ($\overline{M},\overline{g},J)$.
If $(\overline{\nabla}_{X}N)Y=0$, for any $X \in \Gamma(D_{1})$, $Y \in \Gamma(D_{2})$ and $h(X,Y)$ is not an eigenvector of the tensor field $n$ with the eigenvalue $-\frac{q}{p}$, then $M$ is a $D_{1} - D_{2}$ mixed totally geodesic submanifold.
\end{theorem}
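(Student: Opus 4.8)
The plan is to read this statement as the converse of Proposition~\ref{p50}, so the natural entry point is formula (\ref{e40})(ii). Since the hypothesis gives $(\overline{\nabla}_{X}N)Y=0$ for all $X\in\Gamma(D_{1})$, $Y\in\Gamma(D_{2})$, (\ref{e40})(ii) immediately reduces to the single identity $nh(X,Y)=h(X,TY)$. The goal is then to upgrade this to $(pn+qI)h(X,Y)=0$: once this is established, the assumption that $h(X,Y)$ is \emph{not} an eigenvector of $n$ for the value $-\frac{q}{p}$ (i.e. $(pn+qI)h(X,Y)=0$ with $h(X,Y)\neq0$ is forbidden) forces $h(X,Y)=0$ for every $X\in\Gamma(D_{1})$ and $Y\in\Gamma(D_{2})$, which is exactly the $D_{1}$--$D_{2}$ mixed totally geodesic condition.

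To reach $(pn+qI)h(X,Y)=0$ I would proceed along two complementary computations for $n^{2}h(X,Y)$. First, applying the metallic identity (\ref{e1}) to the normal vector $h(X,Y)$ and splitting $Jh(X,Y)$ and $J^{2}h(X,Y)$ into tangential and normal parts via (\ref{e13}) yields the purely algebraic relation $n^{2}h(X,Y)=p\,nh(X,Y)+q\,h(X,Y)-Nt\,h(X,Y)$. Second, replacing $Y$ by $TY\in\Gamma(D_{2})$ (recall $TP_{2}Y\in\Gamma(D_{2})$ by (\ref{e64})(iii)) in $nh(X,Y)=h(X,TY)$ and iterating gives $n^{2}h(X,Y)=h(X,T^{2}Y)$; inserting the slant relation (\ref{e68}), namely $T^{2}Y=\cos^{2}\theta\,(pTY+qY)$ on $\Gamma(D_{2})$, and using $nh(X,Y)=h(X,TY)$ once more produces $n^{2}h(X,Y)=\cos^{2}\theta\,(pn+qI)h(X,Y)$. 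Subtracting the two expressions for $n^{2}h(X,Y)$ I obtain the key relation $Nt\,h(X,Y)=\sin^{2}\theta\,(pn+qI)h(X,Y)$.

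Since $M$ is \emph{proper} semi-slant we have $\theta\neq0$, hence $\sin^{2}\theta\neq0$, so the last relation shows that $(pn+qI)h(X,Y)=0$ is equivalent to $Nt\,h(X,Y)=0$, and by (\ref{e16}) one has $\overline{g}(Nt\,h(X,Y),h(X,Y))=\|t\,h(X,Y)\|^{2}$, so this is in turn equivalent to $t\,h(X,Y)=0$. The heart of the proof is therefore to show that $t\,h(X,Y)$ vanishes. I would first note the easy half, $t\,h(X,Y)\in\Gamma(D_{2})$: for $Z\in\Gamma(D_{1})$ one has $\overline{g}(t\,h(X,Y),Z)=\overline{g}(h(X,Y),NZ)=0$, because $NZ=0$ on the invariant distribution $D_{1}$. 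The delicate and decisive step is to force $t\,h(X,Y)\in\Gamma(D_{1})$ as well, using the hypothesis $(\overline{\nabla}_{X}N)Y=0$ together with $\overline{\nabla}J=0$ through the Gauss--Weingarten formulas (\ref{e30}), (\ref{e32}) and the symmetry (\ref{e35}) of $\nabla T$; then $D_{1}\cap D_{2}=\{0\}$ gives $t\,h(X,Y)=0$. I expect this orthogonality to $D_{2}$ to be the main obstacle, since the naive structure equations tend to reproduce (\ref{e16}) tautologically and genuine use of the normal connection is needed. Once $t\,h(X,Y)=0$ is secured, the chain above yields $(pn+qI)h(X,Y)=0$, and the non-eigenvector hypothesis gives $h(X,Y)=0$, proving that $M$ is $D_{1}$--$D_{2}$ mixed totally geodesic; the Golden case follows by setting $p=q=1$.
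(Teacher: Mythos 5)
Your reduction is set up correctly: from (\ref{e40})(ii) the hypothesis gives $nh(X,Y)=h(X,TY)$, iterating in the $Y$-slot (legitimate since $TY=TP_2Y\in\Gamma(D_2)$) together with (\ref{e68}) gives $n^{2}h(X,Y)=\cos^{2}\theta\,(p\,nh(X,Y)+q\,h(X,Y))$, and your algebraic identity $n^{2}V=p\,nV+qV-NtV$ (the normal part of $J^{2}V=pJV+qV$) is also right, as is the equivalence $Nt\,h(X,Y)=0\Leftrightarrow t\,h(X,Y)=0$ via (\ref{e16}). The first of these computations is exactly the paper's (\ref{e85})--(\ref{e86}). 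But your argument then hinges entirely on proving $t\,h(X,Y)=0$, and this step is not carried out: you only list the tools you would try and concede it is ``the main obstacle.'' That concession is accurate --- as you yourself observe, feeding (\ref{e40})(i) into the symmetry (\ref{e35}) just reproduces (\ref{e16}) tautologically and gives no information about the $D_2$-component of $t\,h(X,Y)$. Since, in your chain, $t\,h(X,Y)=0$ is essentially equivalent to the desired conclusion $(pn+qI)h(X,Y)=0$, the proof is missing its decisive ingredient.

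The paper closes the argument differently, and more cheaply: instead of the algebraic identity for $n^{2}$ on the normal bundle, it computes $n^{2}h(X,Y)$ a second time by iterating in the $X$-slot. Because $D_{1}$ is invariant, $TX=TP_{1}X=JX$ and hence $T^{2}X=J^{2}X=pJX+qX=pTX+qX$ for $X\in\Gamma(D_{1})$, so $n^{2}h(X,Y)=h(T^{2}X,Y)=p\,h(TX,Y)+q\,h(X,Y)=p\,nh(X,Y)+q\,h(X,Y)$, with no $Nt$ term appearing at all. Comparing this with $n^{2}h(X,Y)=\cos^{2}\theta\,(p\,nh(X,Y)+q\,h(X,Y))$ yields $\sin^{2}\theta\,(p\,n+qI)h(X,Y)=0$ directly, and properness ($\theta\neq 0$) plus the non-eigenvector hypothesis finishes the proof. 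If you want to salvage your route, replace the missing ``$t\,h(X,Y)=0$'' step by this $X$-slot computation; note that it (like the paper's own proof) uses $nh(X,Y)=h(TX,Y)$, i.e.\ the vanishing of $(\overline{\nabla}_{Y}N)X$, which one obtains from the symmetry of $h$ and the mixed hypothesis read in both slots.
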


\begin{proof}
If $M$ is a semi-slant submanifold of a locally metallic (or locally Golden) Riemannian manifold ($\overline{M},\overline{g},J)$
and $(\overline{\nabla}_{X}N)Y=0$ then, from (\ref{e40})(ii), we get:
\begin{equation}\label{e85}
n^{2}h(X,Y)=nh(X,TY)=h(X,T^{2}Y),
\end{equation}
for any $X \in \Gamma(D_{1})$ and $Y \in \Gamma(D_{2})$, where $nV:=(J V)^{\perp}$ for any $V \in \Gamma(T^{\perp}M)$. By using $TY=TP_{2}Y$, for any $Y \in \Gamma(D_{2})$ and (\ref{e68}), we obtain:
 \begin{equation}\label{e86}
n^{2}h(X,Y)=\cos^{2}\theta (p nh(X,Y)+q h(X,Y)),
\end{equation}
where $\theta$ is the slant angle of the distribution $D_{2}$.
Using $TX=TP_{1}X=JX$, for any $X \in \Gamma(D_{1})$, we obtain:
$$n^{2}h(X,Y)=h(T^{2}X,Y)=h(J^{2}X,Y)=h(p JX+q X,Y))= p h(T X,Y)+q h(X,Y).$$ Thus, we get:
\begin{equation}\label{e87}
n^{2}h(X,Y)=p nh(X,Y)+q h(X,Y),
\end{equation}
for any $X \in \Gamma(D_{1})$ and $Y \in \Gamma(D_{2})$. From (\ref{e86}), (\ref{e87}) and $\cos^{2}\theta \neq 0$ ($D_{2}$ is a proper semi-slant distribution), we remark that $ nh(X,Y) = -\frac{q}{p}h(X,Y)$ and this implies  $h(X,Y)=0$, for any $X \in \Gamma(D_{1})$ and $Y \in \Gamma(D_{2})$, because $h(X,Y)$ is not an eigenvector of $n$ with the eigenvalue $-\frac{q}{p}$. Thus $M$ is a $D_{1}-D_{2}$ mixed totally geodesic submanifold in the locally metallic (or locally Golden) Riemannian manifold ($\overline{M},\overline{g},J)$.
\end{proof}

In a similar manner as in (\cite{Atceken1}, Theorem 4.8), we get:

\begin{prop}
Let $M$ be a semi-slant submanifold in a locally metallic (or locally Golden) Riemannian manifold ($\overline{M},\overline{g},J$). Then $N$ is parallel if and only if the shape operator $A$ verifies:
\begin{equation}\label{e88}
 A_{nV}X=TA_{V}X=A_{V}TX,
\end{equation}
 for any  $X \in \Gamma(TM)$ and $V \in \Gamma(T^{\bot}M)$.
\end{prop}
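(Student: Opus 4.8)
The plan is to translate the parallelism of $N$ into a shape-operator identity by pairing the covariant derivative against an arbitrary normal field and exploiting the $\overline{g}$-symmetries of $T$, $n$ and the shape operators, exactly in the spirit of (\cite{Atceken1}, Theorem 4.8). The starting point is formula (\ref{e40})(ii), which is available because $\overline{M}$ is locally metallic (or Golden): it gives $(\overline{\nabla}_X N)Y = nh(X,Y) - h(X,TY)$. Hence $N$ is parallel, i.e. $(\overline{\nabla}_X N)Y = 0$ for all $X,Y$, precisely when $nh(X,Y) = h(X,TY)$ for all $X, Y \in \Gamma(TM)$, and my first move is to encode this as a statement about $A$.

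First I would fix $V \in \Gamma(T^{\bot}M)$ and compute $\overline{g}((\overline{\nabla}_X N)Y, V)$. Applying the symmetry (\ref{e15})(ii) of $n$, the relation (\ref{e31}) between $h$ and $A$, and the symmetry (\ref{e15})(i) of $T$, one gets
\[
\overline{g}((\overline{\nabla}_X N)Y, V) = \overline{g}(h(X,Y), nV) - \overline{g}(A_V X, TY) = \overline{g}(A_{nV}X - TA_V X, Y),
\]
for all $X, Y \in \Gamma(TM)$. Since $A_{nV}X - TA_V X$ is tangent and $Y$ is arbitrary, $N$ is parallel if and only if $A_{nV}X = TA_V X$ for all $X \in \Gamma(TM)$ and $V \in \Gamma(T^{\bot}M)$. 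This already yields the first equality in (\ref{e88}), and reading the displayed identity backwards gives the converse implication at once.

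It then remains to upgrade $A_{nV} = TA_V$ to the second equality $TA_V X = A_V TX$. Here I would invoke the $\overline{g}$-self-adjointness of the shape operators $A_V$ and $A_{nV}$ (a consequence of (\ref{e31}) together with the symmetry of $h$) combined with the symmetry of $T$: for all $X, Y \in \Gamma(TM)$,
\[
\overline{g}(TA_V X, Y) = \overline{g}(A_{nV}X, Y) = \overline{g}(X, A_{nV}Y) = \overline{g}(X, TA_V Y) = \overline{g}(A_V TX, Y),
\]
where the middle step uses the symmetry of $A_{nV}$ and the last uses those of $T$ and $A_V$; this forces $TA_V X = A_V TX$ and completes (\ref{e88}). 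The hard part — really the only subtlety — is this last step: the first equality follows mechanically, but the second rests on $A_{nV}$ being symmetric, which in turn reflects that $n$ maps normal fields to normal fields so that $A_{nV}$ is a genuine shape operator. I also note that the semi-slant hypothesis is not essential to this argument; only the local metallic (or Golden) condition, needed for (\ref{e40})(ii), is actually used.
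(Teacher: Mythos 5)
Your proposal is correct and follows essentially the same route as the paper: both compute $\overline{g}((\overline{\nabla}_XN)Y,V)$ from (\ref{e40})(ii) and rewrite it as $\overline{g}(A_{nV}X-TA_VX,Y)$ using the symmetries of $n$, $T$ and the shape operators, then extract (\ref{e88}). The only cosmetic difference is that the paper obtains the second equality $TA_VX=A_VTX$ by also writing the same quantity as $\overline{g}(A_{nV}Y-A_VTY,X)$, whereas you deduce it from the first equality via the self-adjointness of $A_{nV}$, $A_V$ and $T$ --- the same symmetries, deployed in a slightly different order.
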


\begin{proof}
From (\ref{e2}), we get $\overline{g}(nh(X,Y),V)=\overline{g}(Jh(X,Y),V)=\overline{g}(h(X,Y),nV)$, for any $X, Y \in \Gamma(TM)$, $V \in \Gamma(T^{\bot}M)$. Thus, by using (\ref{e40})(ii), we obtain
$$\overline{g}((\overline{\nabla}_{X}N)Y,V)=\overline{g}(h(X,Y),nV)-\overline{g}(h(X,TY),V)=\overline{g}(A_{nV}X,Y)-\overline{g}(A_{V}X,TY),$$
for any $X, Y \in \Gamma(TM)$, $V \in \Gamma(T^{\bot}M)$ and we have
\begin{equation}\label{e89}
\overline{g}((\overline{\nabla}_{X}N)Y,V)=\overline{g}(A_{nV}X-TA_{V}X,Y)=\overline{g}(A_{nV}Y-A_{V}TY,X),
\end{equation}
for any $X, Y \in \Gamma(TM)$, $V \in \Gamma(T^{\bot}M)$. Thus, from  (\ref{e89}) we obtain (\ref{e88}).
\end{proof}

\begin{theorem}
Let $M$ be a proper semi-slant submanifold in a locally metallic (or locally Golden) Riemannian manifold ($\overline{M},\overline{g},J$). If the shape operator $A$ verifies
$A_{nV}X=TA_{V}X=A_{V}TX$, for any  $X \in \Gamma(TM)$, $V \in \Gamma(T^{\bot}M)$ and $h(X,Y)$ is not an eigenvector of the tensor field $n$ with the eigenvalue $-\frac{q}{p}$ then, $M$ is a $D_{1} - D_{2}$ mixed totally geodesic submanifold.
\end{theorem}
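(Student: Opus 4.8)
The plan is to recognize this statement as an immediate consequence of the two results that precede it, namely the proposition characterizing the parallelism of $N$ through the shape-operator identity (\ref{e88}), and Theorem \ref{t5}. The hypothesis imposed here on $A$ is verbatim the condition (\ref{e88}), so the strategy is first to convert it into a statement about $\overline{\nabla}N$ and then to feed that into Theorem \ref{t5}.

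First I would apply the preceding proposition. Since $A$ satisfies $A_{nV}X = TA_{V}X = A_{V}TX$ for every $X \in \Gamma(TM)$ and $V \in \Gamma(T^{\bot}M)$, that proposition gives that $N$ is parallel, i.e. $(\overline{\nabla}_{X}N)Y = 0$ for all $X, Y \in \Gamma(TM)$. If one prefers a self-contained argument, this step is exactly the computation in the proof of that proposition: starting from the $\overline{g}$-symmetric relation (\ref{e89}), namely $\overline{g}((\overline{\nabla}_{X}N)Y, V) = \overline{g}(A_{nV}X - TA_{V}X, Y)$, the assumption $A_{nV}X = TA_{V}X$ forces the right-hand side to vanish for all $Y$ and $V$, whence $(\overline{\nabla}_{X}N)Y = 0$.

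Next I would specialize to $X \in \Gamma(D_{1})$ and $Y \in \Gamma(D_{2})$. For these restricted vector fields the previous step gives precisely $(\overline{\nabla}_{X}N)Y = 0$, which is the standing hypothesis of Theorem \ref{t5}. Since $M$ is assumed to be a proper semi-slant submanifold and, by assumption, $h(X,Y)$ is not an eigenvector of $n$ with eigenvalue $-\frac{q}{p}$, the remaining hypotheses of Theorem \ref{t5} are also satisfied. Invoking that theorem then yields directly that $M$ is a $D_{1} - D_{2}$ mixed totally geodesic submanifold, which is the assertion.

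I do not expect a genuine obstacle here: the result is essentially a corollary obtained by chaining the equivalence (\ref{e88}) $\Leftrightarrow$ \enquote{$N$ parallel} with Theorem \ref{t5}. The only point requiring mild care is that Theorem \ref{t5} needs $(\overline{\nabla}_{X}N)Y = 0$ only for $X \in \Gamma(D_{1})$, $Y \in \Gamma(D_{2})$, whereas the preceding proposition delivers the stronger global vanishing; restricting to these distributions is harmless, so no additional work is required.
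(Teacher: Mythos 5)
Your proposal is correct and follows essentially the same route as the paper: the authors likewise use the hypothesis $A_{nV}X=TA_{V}X$ in relation (\ref{e89}) to conclude $(\overline{\nabla}_{X}N)Y=0$ for all $X,Y\in\Gamma(TM)$, and then invoke Theorem \ref{t5} to obtain the $D_{1}$--$D_{2}$ mixed totally geodesic conclusion. Your added remark that the global vanishing of $\overline{\nabla}N$ restricts harmlessly to $X\in\Gamma(D_{1})$, $Y\in\Gamma(D_{2})$ is a correct (and slightly more careful) articulation of the step the paper leaves implicit.
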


\begin{proof}
If $A_{nV}X=TA_{V}X=A_{V}TX$, for any  $X \in \Gamma(TM)$, $V \in \Gamma(T^{\bot}M)$ then, from (\ref{e89}) we obtain $(\overline{\nabla}_{X}N)Y=0$ for any $X, Y \in \Gamma(TM)$ and using the Theorem 5 we obtain that $M$ is a $D_{1}-D_{2}$ mixed totally geodesic submanifold.
\end{proof}


\linespread{1}
Cristina E. Hretcanu, \\ Stefan cel Mare University of Suceava, Romania, e-mail: criselenab@yahoo.com, cristina.hretcanu@fia.usv.ro\\
Adara M. Blaga, \\ West University of Timisoara, Romania, e-mail: adarablaga@yahoo.com.

\end{document}